\documentclass[a4paper,10pt]{article}
\usepackage[latin1]{inputenc}
\usepackage{amsthm}
\usepackage{amsmath,amssymb,amsfonts}
\usepackage{hyperref}
\usepackage[english]{babel}
\usepackage[mathscr]{euscript}
\usepackage{enumerate}
\usepackage{xspace}
\usepackage[mathlines]{lineno}
\makeatletter
\newdimen\mymathindent
\newenvironment{bulletequation}%
    {\@beginparpenalty\predisplaypenalty
     \@endparpenalty\postdisplaypenalty
     \refstepcounter{equation}%
     \trivlist \item[]\leavevmode
       \hb@xt@\linewidth\bgroup $\m@th
         \displaystyle
         \hskip\mymathindent}%
        {$\hfil 
         \displaywidth\linewidth\hbox{\@eqnnum}%
       \egroup
     \endtrivlist}
\makeatother

\usepackage{url}
\numberwithin{equation}{section}
\newtheorem{thm}{Theorem}[section]

\newtheorem{prop}[thm]{Proposition}
\newtheorem{lm}[thm]{Lemma}
\newtheorem{cor}[thm]{Corollary}

\theoremstyle{definition}

\newtheorem{Rq}[thm]{Remark}

\theoremstyle{remark}

\theoremstyle{plain}

\DeclareMathAlphabet{\calptmx}{OMS}{ztmcm}{m}{n}

\newcommand{\ds}{{\displaystyle}}
\renewcommand{\d}{d}

\newcommand{\C}{\mathbb{C}}
\newcommand{\R}{\mathbb{R}}

\newcommand{\Cc}{\mathcal{C}}

\newcommand{\Sb}{\mathbb{S}}
\newcommand{\non}{\noindent}

\newcommand{\ve}{\vspace{3mm}}
\author{Bakri Laurent}
\title{{\bf\LARGE Vanishing order of solutions to Schr\"odinger equation}}
\begin{document}
\selectlanguage{english}
\date{}
\maketitle
 \begin{center}\small{E-mail : laurent.bakri@gmail.com}\end{center}
\begin{abstract}
 On a compact manifold, we give a Carleman estimate on the operator $\Delta +W$ when $\Delta$ is the laplacian operator  and $W$ is a bounded function.
  We then deduce quantitative uniqueness result for solutions to $$\Delta u +Wu=0$$ using doubling estimates. 
In particular this show that the vanishing order is everywhere less than $C_1\|W\|_\infty^{\frac{2}{3}}+C_3$.
 Finally we investigate the sharpness of this results by constructing a potential $W$ with compact support on $\Sb^2$.

\end{abstract}

\section{Introduction and statement of the results}
Let $(M,g)$ be a  compact, connected, $n$-dimensional smooth Riemannian manifold, $\Delta$ the Laplace operator on $M$ and $W$
 a bounded function on $M$.
 If $u$ is a non trivial solution to \begin{equation}\label{E}\Delta u+Wu=0,\end{equation} 

\non we consider  the possible vanishing order, depending on $W$,
 of $u$ in any point.
In the case that $W$ is a constant, \emph{i.e}, when dealing with the eigenfunctions of the Laplacian, 
it is a well known result of H. Donnelly and C. Fefferman \cite{DF1} that the vanishing order is erverywhere bound by $c\sqrt{\lambda}$, 
with $c$  constant depending only on $M$. 
When $W$ is a $\Cc^1$ function, the author have shown in \cite{B1} that the vanishing order 
is bounded by $C_1\sqrt{\|W\|_{\Cc^1}}+C_2$, where $\|W\|_{\Cc^1}=\sup_M|W|+\sup_M |\nabla W|$ and the norm
  $|\nabla W|$ is taking with respect to the metric $g$. 
In this paper we investigate the possible vanishing order 
of solutions to \eqref{E} when $W$ is only a bounded function. For $W$ a real valued function,
I. Kukavica established, in \cite{K}, the following uniform upper bound :
 $$C(\mathrm{sup}(W_-)^{\frac{1}{2}}+(\mathrm{osc}({W}))^2+1)\\$$

 where $W_-=\mathrm{min}(W,0)$ 
and $\mathrm{osc}(W)=\sup W - \inf W$. In particular the vanishing order of solutions to \eqref{E} is uniformly bounded 
by $$C(1+\|W\|_\infty^2).$$

\non Our first goal is to sharpen this last estimate. We shall proove the following: 

 \ve\begin{thm}\label{t1}  There exist  two non-negatives constants $C_1,C_2$ depending only on $M$, such that, for any solutions $u$ 
to \eqref{E}
 and for any point $x_0$ in $M$,
the vanishing order of any non-zero solutions to \eqref{E} is everywhere bounded by $$C_1\|W\|^{\frac{2}{3}}_{\infty}+C_2.$$
\end{thm}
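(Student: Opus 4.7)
The proof follows the classical Carleman-plus-doubling scheme underlying the Donnelly--Fefferman eigenfunction bound and Kukavica's estimate; the improvement from the $\|W\|_\infty^{2}$ exponent to $\|W\|_\infty^{2/3}$ should come from a Carleman estimate in which the zero-order term on the left-hand side is weighted by a \emph{cubic} power of the large parameter $\tau$. The overall plan is: (i) prove such a sharp Carleman estimate for $\Delta$ in a small punctured geodesic ball; (ii) apply it to a cut-off of $u$ to deduce a three-ball (Hadamard-type) inequality for solutions of $\Delta u+Wu=0$; (iii) iterate this inequality along a chain of balls to obtain a doubling estimate for $u$, and translate the resulting doubling constant into a bound on the vanishing order.

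The central estimate to prove is schematically
$$
\tau^{3}\int e^{2\tau\phi}\,v^{2}\,dv + \tau\int e^{2\tau\phi}|\nabla v|^{2}\,dv \;\leq\; K\int e^{2\tau\phi}(\Delta v)^{2}\,dv,
$$
valid for every $v\in\Cc^{\infty}_{c}$ supported in a punctured geodesic ball around $x_{0}$ and for $\tau$ above a threshold depending only on $M$. The weight $\phi$ is taken radial, $\phi(r)=-\log r$ modified by subprincipal corrections, tuned so that the conjugated operator $e^{\tau\phi}\Delta e^{-\tau\phi}$ satisfies H\"ormander's pseudoconvexity condition and so that the commutator of its self-adjoint and anti-self-adjoint parts produces precisely a cubic coefficient in front of $v^{2}$. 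Applying this estimate to $v=\chi u$, where $\chi$ is a cut-off supported on a fixed annulus, and using $\Delta u=-Wu$, the right-hand side splits into a bulk term bounded by $K\|W\|_{\infty}^{2}\int e^{2\tau\phi}u^{2}$ plus commutator terms localised on $\{\nabla\chi\ne 0\}$. The bulk term is absorbed by the left as soon as $\tau^{3}\geq 2K\|W\|_{\infty}^{2}$, that is $\tau\geq c\,\|W\|_{\infty}^{2/3}$; optimising $\tau$ at this threshold yields a Hadamard-type three-ball inequality with constant of the form $\exp(C\|W\|_{\infty}^{2/3}+C')$.

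A standard chain-of-balls argument, exploiting compactness and connectedness of $M$ to compare the local $L^{2}$-norm of $u$ near $x_{0}$ with its global norm, upgrades the three-ball inequality into a doubling estimate
$$
\int_{B(x_{0},2r)} u^{2}\,dv \;\leq\; \exp\bigl(C_{1}\|W\|_{\infty}^{2/3}+C_{2}\bigr)\int_{B(x_{0},r)} u^{2}\,dv,
$$
uniform in $x_{0}\in M$ and $r\leq r_{0}(M)$. If $u$ vanishes to order $N$ at $x_{0}$, then $\int_{B(x_{0},r)}u^{2}\leq C_{0}r^{2N+n}$ for small $r$; iterating the doubling inequality from a fixed scale $r_{0}$ down to arbitrarily small scales and comparing with this bound forces $2N+n \leq C_{1}\|W\|_{\infty}^{2/3}+C_{2}$ after division by $\log 2$, which is the conclusion of the theorem.

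The main obstacle is the sharp Carleman estimate itself: the weight $\phi$ must be engineered so that both the pseudoconvexity of $\Delta$ with respect to $\phi$ and the positivity of the full commutator hold, and so that the coefficient in front of $v^{2}$ is genuinely $\tau^{3}$ rather than some lower power. The Riemannian curvature enters through lower-order terms that have to be dominated by this leading $\tau^{3}$ contribution, which is harmless when $\tau$ is large but dictates the precise subprincipal part of $\phi$. It is this polynomial, rather than exponential, dependence on $\tau$ in the Carleman estimate which, after the absorption step, translates into the exponent $2/3$ on $\|W\|_{\infty}$ in the final vanishing-order bound.
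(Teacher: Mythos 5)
Your overall architecture coincides with the paper's: a Carleman inequality whose zero-order term carries a $\tau^{3}$ coefficient, absorption of $\|W\|_{\infty}^{2}\int e^{2\tau\phi}u^{2}$ under the threshold $\tau^{3}\gtrsim\|W\|_{\infty}^{2}$ (whence $\tau\geq C\|W\|_{\infty}^{2/3}$), a three-balls inequality with constant $e^{C_{1}\|W\|_{\infty}^{2/3}+C_{2}}$, and the standard conversion of a doubling estimate into a vanishing-order bound via $\int_{B_{r}}u^{2}\lesssim r^{2N+n}$. The gap is in your step (iii), the passage from the three-balls inequality to the doubling estimate. The chain-of-balls argument propagates the three-balls inequality \emph{spatially} (from a ball where $u$ is large to a ball around $x_{0}$) and yields only the fixed-scale lower bound $\|u\|_{B_{R}(x_{0})}\geq e^{-C_{R}(1+\|W\|_{\infty}^{2/3})}\|u\|_{L^{2}(M)}$ for $R$ comparable to a fixed radius $R_{0}(M)$; it says nothing about the ratio $\|u\|_{B_{2r}}/\|u\|_{B_{r}}$ for arbitrarily small $r$. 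If one instead tries to iterate the three-balls inequality \emph{in scale}, setting $m(r)=\|u\|_{B_{r}(x_{0})}$, normalizing $\|u\|_{L^{2}(M)}=1$ and using $m(r/2)\geq K^{-1/\alpha}m(r)^{1/\alpha}$ with $K=e^{C_{1}\|W\|_{\infty}^{2/3}+C_{2}}$, the constant compounds geometrically: after $k$ halvings one only gets $m(2^{-k}r_{0})\geq\exp(-C\alpha^{-k})$, i.e.\ $m(r)\geq\exp(-C(r_{0}/r)^{\beta})$ with $\beta=\log_{2}(1/\alpha)>0$. That stretched-exponential lower bound is compatible with infinite vanishing order, so no bound on $N$ follows from this route.

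The missing ingredient is a \emph{second} application of the Carleman inequality, to a cut-off equal to $1$ on an annulus $\{\tfrac{5\delta}{4}\leq r\leq\tfrac{R}{2}\}$ and supported in $\{\delta\leq r\leq R\}$, with $R$ fixed and $\delta$ allowed to tend to $0$. This is precisely why the paper states its estimate for functions supported in $B_{R_{0}}\setminus B_{\delta}$ and carries the extra term $\tau^{2}\delta\|r^{-1/2}e^{\tau\phi}u\,r^{-n/2}\|^{2}$ on the left-hand side, and why the weight $\phi=-\ln r+r^{\varepsilon}$ is singular at the origin: ratios such as $e^{\tau\phi(\delta)}/e^{\tau\phi(3\delta)}$ depend only on the ratio of the radii, so the cut-off error terms near $r=\delta$ can be compared with $\|u\|_{B_{3\delta/2}}$ uniformly in $\delta$, while the error terms near $r=R$ are eliminated by the fixed-scale lower bound from the chain argument for the same choice $\tau\sim\|W\|_{\infty}^{2/3}+C$. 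This yields $\|u\|_{B_{2r}}\leq e^{C_{1}\|W\|_{\infty}^{2/3}+C_{2}}\|u\|_{B_{r}}$ with constants independent of $r$, which is the statement your final step actually requires. The rest of your outline (the $\tau^{3}$ estimate, the absorption, the conversion of doubling into the vanishing-order bound) matches the paper.
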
 
Note here that the potential $W$ and the solution $u$ may take their values in $\C$.
This result is sharp in the following sense, considering complex valued solutions of \eqref{E}, the exponant  $2/3$ on $\|W\|_\infty$ 
is the lowest one can obtain in the upper bound on the vanishing order of solutions.  
More precisely we will show the following : 
\begin{thm}
\label{EX}
There exists a constant $C$ such that, if $N>0$ is an arbitrary great number, there exists  a function $W \in L^\infty(\mathbb{S}^2,\C)$ 
with $N\geq C \|W\|_\infty^{\frac{2}{3}}$ and $u\in \mathcal{C}^2(\mathbb{S}^2,\C)$ solution to \eqref{E} 
which vanishes with order $N$  in $P$. Moreover $W$ can be choosen of compact support with  
 $$\mathrm{supp}(W)\subset\left(\Sb^2\setminus(\{P\}\cup \{Q\})\right)$$ where 
the points $P,Q$ are antipodals.  

\end{thm}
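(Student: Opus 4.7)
The plan is to produce the example first in a conformally flat chart and then to transplant it to $\Sb^2$. Stereographic projection from $Q$ maps $\Sb^2\setminus\{Q\}$ conformally to $\R^2$ and sends $P$ to the origin; in this chart the equation $\Delta u+Wu=0$ becomes $\Delta_0 u+\lambda^2 W u=0$ with $\lambda$ the conformal factor, which is smooth and positive on every annulus bounded away from $0$. It therefore suffices to construct on $\R^2$ a compactly supported potential $\tilde W\in L^\infty(\R^2,\C)$ (supported in some annulus $\{r_0\le |x|\le r_1\}$) together with a $\Cc^2$ solution $u$ of $\Delta_0 u+\tilde W u=0$ which equals $z^N$ on $\{|x|\le r_0\}$ and is a harmonic polynomial of bounded degree on $\{|x|\ge r_1\}$; the harmonicity beyond $r_1$ ensures that the pulled-back $u$ extends smoothly across $Q$.

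The construction itself is an adaptation of Meshkov's classical example, replacing decay at infinity by vanishing at the origin. In polar coordinates $(r,\varphi)$, subdivide the transition annulus $\{r_0\le r\le r_1\}$ into a finite sequence of sub-annuli, and on each sub-annulus prescribe $u$ as a two-mode sum $f_m(r)e^{im\varphi}+f_{m'}(r)e^{im'\varphi}$, with $m>m'$ and the mode $m$ inherited from the previous band. The radial profiles $f_m,f_{m'}$ are chosen so that, across the full cascade, the surviving angular mode drops from $e^{iN\varphi}$ to the constant mode $1$, while $u$ never vanishes; between two consecutive sub-annuli one profile is damped and the other is carried over as the incoming mode of the next band. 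The potential is then defined by $\tilde W:=-\Delta_0 u/u$ inside the transition annulus, and by $\tilde W\equiv 0$ outside.

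The key estimate reduces to a pointwise bound $|\tilde W|\le CN^{3/2}$ on each sub-annulus. Using the identity $\Delta_0(h(r)e^{ip\varphi})=(h''+h'/r-p^2 h/r^2)e^{ip\varphi}$, this is arranged by choosing the widths of the sub-annuli and the coefficients of the two profiles so that the $N^2/r^2$ contributions in the numerator cancel to leading order, leaving a residue of size $N^{3/2}$, while the denominator stays bounded below in absolute value. The main technical obstacle is the combinatorial bookkeeping of this cascade: one has to choose the number of sub-annuli (of order a fractional power of $N$), their widths (of order a negative power of $N$), and the successive mode drops $m-m'$, so that the exponent $3/2$ and not $2$ is obtained, while verifying at every point that $u\neq 0$ so that $\tilde W\in L^\infty$ in the strong pointwise sense and not merely almost everywhere. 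Pulling back through the stereographic projection then yields $W=\lambda^{-2}\tilde W\in L^\infty(\Sb^2,\C)$, supported in an annulus disjoint from $\{P,Q\}$, and a solution $u\in\Cc^2(\Sb^2,\C)$ vanishing to order $N$ at $P$, with $\|W\|_\infty\le CN^{3/2}$, equivalently $N\ge C'\|W\|_\infty^{2/3}$, as claimed.
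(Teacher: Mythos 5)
Your overall strategy --- a Meshkov-type two-mode cascade on planar annuli, defining $W:=-\Delta u/u$ on the transition region and transplanting to $\Sb^2$ by stereographic projection via conformal invariance of $\Delta$ in dimension $2$ --- is the same as the paper's. (The paper runs the cascade with modes increasing outward, so that $u\sim z^{-N}$ near infinity and the order-$N$ zero sits at the image of infinity, whereas you put $z^N$ at the origin and end with a constant outside; these are equivalent up to relabelling the poles, and your reduction to $\R^2$ is correct. One small point: your terminal harmonic function must be a nonzero \emph{constant}, not merely "of bounded degree", since a harmonic polynomial of positive degree is unbounded and its pullback does not extend across $Q$.)

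The genuine gap is at the heart of the construction. With the ansatz $u=f_m(r)e^{im\varphi}+f_{m'}(r)e^{im'\varphi}$, $m\neq m'$, $f_m,f_{m'}$ radial, the function $u$ \emph{must} vanish in every transition band: to pass from the regime $|f_m|\gg|f_{m'}|$ to $|f_{m'}|\gg|f_m|$, continuity forces $|f_m(r^*)|=|f_{m'}(r^*)|$ at some radius, and on that circle $u$ has $|m-m'|$ zeros. So "$u$ never vanishes" and "the denominator stays bounded below" cannot be arranged, and $-\Delta u/u$ is a priori unbounded near these zeros. The device that makes the construction work --- and which is the real content of Meshkov's argument, reproduced here as Lemma \ref{M1} and Steps I.c--I.d --- is an angular perturbation of the phase of the second mode: $e^{im'\varphi}$ is replaced by $e^{i(m'\varphi+h(\varphi))}$ with $h$ periodic, locally linear of slope $-4k$ near $2(n+k)$ equally spaced angles, and with $|h|\le 5kT$, $|h'|\le 5k$, $|h''|\le Ckn$. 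In the sectors where $h$ is exactly linear both modes are exactly harmonic, so one sets $W=0$ there and the (unavoidable) zeros of $u$, confined to those sectors, are harmless; in the complementary sectors the relative phase stays at distance at least $\pi/7$ from $2\pi\Z$, giving $|u|\ge\sin(\pi/7)\,|u_2|$ and hence the pointwise bound $|\Delta u|\le CN^{3/2}|u|$. Without this mechanism (or an equivalent one) your plan fails; with it, the parameter choices are essentially forced (mode jumps $k\sim\sqrt n$, band widths $\sim N^{-1/2}$, about $\sqrt N$ bands), which is exactly where the exponent $3/2$ comes from.
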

\non In the case one only consider real valued functions, the upper bound  $$C_1\|W\|^{\frac{1}{2}}_\infty+C_2$$ 
is expected (see by example \cite{K}).
However this seems to be a difficult problem which, to the knowledge of the author, has not be solved yet. 
As mentionned in \cite{BK}, since Carleman estimates don't distinguish between the real and complex case, 
it seems difficult to obtain such result with this method.
 

The method of \cite{K} was based on the frequency function \cite{GL,L} when our's relies on Carleman estimates
 \cite[...]{DF1,JK,Kenig,KT2,KT1,Regs,Sogge}.
 This two methods are the principal way to obtain quatitative uniqueness results for solutions of partial differential equations.
The first section is devoted to study the vanishing order of solutions to \eqref{E}, by using Carleman inequalities. 
This is inspired by the works of 
H. Donnelly and C. Fefferman for eigenfunctions of the Laplacian.
In particular in section 2 we first established a Carleman estimate which is only true for great enough paramater
 $\tau\geq\tau_0$ and we state explicity how $\tau_0$ depends on the potential $W$. 
We proove this estimate by elementary method and for a certain family of weight functions. Then we used the
 Carleman estimate with a special choice of weight functions to obtain a Hadamard's type three circles theorem 
and doubling inequalities on solution of \eqref{E}. This will proove theorem \ref{t1}. 
Section 3 is devoted to proove theorem \ref{EX}.  We construct a sequence $(u_k,W_k)$ verifying 
 $\Delta u_k+W_ku_k=0$ on the two dimensionnal sphere which shows
that the sharp upper bound $C_1\|W\|_\infty^{\frac{2}{3}}+C_2$ on the vanishing order can be obtained with $W_k$ 
of support in the neighborhood of two antipodals points.\\ 
 
It must be emphasis that both the upper bound and his sharpness are already known. 
In \cite{Kenig}, C. A. Kenig  established this results, with similar works on $\R^n$ which clearly imply 
the present results of this paper.

\vspace{3mm}
 {\bf Notations} \\
For a fixed point $x_0$  in $M$ we will use the following standard notations:
\begin{itemize}
\item $r:=r(x)=d(x,x_0)$ the Riemannian distance from $x_0$,
\item $v_g$ the volume form induced by $g$,
\item $B_r:=B_r(x_0)$ the geodesic ball centered at  $x_0$ of radius $r$,
\item $A_{r_1,r_2}:=B_{r_2}\setminus B_{r_1}$,
\item $\|\cdot\|$ the  $L^2$ norm on $M$ and $\|\cdot\|_A$ the $L^2$ norm on the (measurable) set $A$.
 In case $T$ is a vector field (or a tensor), 
it has to be understood as $\||T|_g\|$. 
\item $c$, $C$, $c_i$ and $C_i$ for $i=1,2,\cdots$ are constants wich may depend on $(M,g)$ and other 
quantities such that the weight functions in Carleman estimates (section 2.1), but not 
on the potential $W$ or solutions $u$. Their values might change from one line to another. 
\end{itemize}
\section{Vanishing order} Recall that Carleman estimates are weighted integral inequalities with a weight 
function $e^{\tau\phi}$, where the function 
$\phi$ satisfy some convexity properties, see by example \cite{Horm2,I,JL}. 
We first proove in section 2.1 an $L^2$, singular weighted, Carleman inequality on the operator $\Delta +W$, 
for some class of weight functions satisfying convenient properties (see \eqref{f} and \eqref{weight} below). 
After that, in sections 2.2 and 2.3, we  make a particular choice of weight function which will allows us 
to derive a doubling estimate on solutions to \eqref{E} .
\subsection{Carleman estimate}

Let us first define the class of (singular) weight functions we will work with.\newline  
Let  $f\: : ]-\infty,T[ \rightarrow \mathbb{R}$ of  class $\mathcal{C}^3$, and assume  
there are constants  $\mu_i>0$,  $i=1,\cdots,4$,  such that  :
\begin{equation}\label{f}
\begin{array}{lcr}
&0< \mu_1\leq f^\prime(t)\leq \mu_2&\\
&\displaystyle{\mu_3|f^{(3)}(t)|\leq -f^{\prime\prime}(t)\leq \mu_4}&,\ \forall t\in]-\infty,T[\\
&\displaystyle{\lim _{t\rightarrow-\infty}-e^{-t}f^{\prime\prime}(t)}=+\infty &
\end{array}
\end{equation}
{\bf Example :} It is clear that the functions defined by $f_\varepsilon(t)=t-e^{\varepsilon t},$ 
satisfy the conditions \eqref{f}
 provided $0<\varepsilon<1$ and $T$ is a large negative number.\vspace{0,2cm} \\
Finally we define our weight function as \begin{equation}\label{weight}\phi(x)=-f(\ln r(x)).\end{equation}

Now we can state the main result of this section:

\begin{thm}\label{ticsl}
There exist positive constants $R_0, C,C_1,C_2$, which depend only on  $M$ and $f$, such that, 
for any  $x_0\in M$, any $\delta\in(0,R_0)$, any $W\in L^\infty(M),\ $any 
 $u\in \Cc^\infty_0\left(B_{R_0}(x_0)\setminus B_{\delta}(x_0)\right)$ and any
 $\tau \geq C_1\|W\|^{\frac{2}{3}}_\infty+C_2$, one has

\begin{equation}\label{Si}\begin{split}
C\left\|r^2e^{\tau\phi}(\Delta +W)u r^{-n/2}\right\|^2
&
\geq
\tau^3\left\|\sqrt{|f^{\prime\prime}(\mathrm{ln}\:r)|}e^{\tau\phi}ur^{-n/2}\right\|^2 
\\
+\ \tau^2\delta\left\| r^{-\frac{1}{2}}e^{\tau\phi}ur^{-n/2}\right\|^2
&
+
\tau\left\| r\sqrt{|f^{\prime\prime}(\mathrm{ln}\:r)|}e^{\tau\phi}|\nabla u|^2r^{-n/2}\right\|^2.
\end{split}\end{equation}
\end{thm}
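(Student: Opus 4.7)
The plan is to follow the classical recipe for singular weighted $L^2$ Carleman estimates: pass to polar geodesic coordinates around $x_0$, perform the exponential change of unknown and a radial weight conjugation, split the conjugated operator into self-adjoint and skew-adjoint parts, and exhibit positivity of the commutator via the conditions \eqref{f}.

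More precisely, I would first introduce geodesic polar coordinates $(r,\omega)\in(0,R_0)\times S^{n-1}$ around $x_0$. On a geodesic ball of radius $R_0$ small enough, the Laplacian takes the form $\Delta=\partial_r^2+\frac{n-1}{r}\partial_r+\frac{1}{r^2}\Delta_\omega+\text{lower order}$, where the error terms come from the Jacobian of the exponential map and are controlled by $R_0$. Setting $t=\ln r\in(-\infty,\ln R_0)$ and introducing the new unknown $v=e^{\tau\phi}r^{(n-2)/2}u$ together with the volume element $r^{-n/2}\,dv_g$, I would rewrite the quantity $r^{2-n/2}e^{\tau\phi}(\Delta+W)u$ as $(P_\tau+r^2 W)v$, where $P_\tau$ is a second order operator in $(t,\omega)$ depending polynomially on $\tau f'$, $\tau f''$, and $\tau^2 (f')^2$. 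This reduction turns the singular weight into a genuine parameter-dependent operator on the cylinder $\R\times S^{n-1}$ equipped with the product measure $dt\,d\omega$, and in particular the norms appearing in \eqref{Si} become standard $L^2$-norms after absorbing the factor $r^{-n/2}$.

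The heart of the argument is the decomposition $P_\tau=S_\tau+A_\tau$ into its symmetric and skew-symmetric parts with respect to $dt\,d\omega$. Expanding $\|P_\tau v\|^2=\|S_\tau v\|^2+\|A_\tau v\|^2+\langle[S_\tau,A_\tau]v,v\rangle$, the whole game is to check that $\langle[S_\tau,A_\tau]v,v\rangle$ is bounded below by the right-hand side of \eqref{Si}. A direct but careful computation — this is the step I expect to be the main obstacle — shows that the commutator produces exactly the quadratic forms $\tau^3|f''|\,v^2$, $\tau|f''|\,|\nabla v|^2$, plus boundary-type contributions of the form $\tau^2\delta\,r^{-1}v^2$ (arising from integration by parts at $t=\ln\delta$ due to the cut-off support), modulo remainders controlled by $\tau^2 f'''$, $\tau^2 (f'')^2/f'$, and the curvature errors from the metric expansion. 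Here the hypotheses \eqref{f} enter decisively: $f'$ bounded between $\mu_1$ and $\mu_2$ keeps the principal $\tau^2(f')^2$-coefficients under control, $-f''\leq\mu_4$ and $\mu_3|f'''|\leq-f''$ ensure that the troublesome remainders are absorbed by the leading commutator terms once $\tau$ is large enough, and the growth condition $-e^{-t}f''(t)\to+\infty$ controls the geometric error terms uniformly on $B_{R_0}$.

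Finally, the potential is handled by brute force. Cauchy--Schwarz gives
\[
\|r^2 e^{\tau\phi}Wu\,r^{-n/2}\|^2\leq \|W\|_\infty^2\,\|r^2 e^{\tau\phi}u\,r^{-n/2}\|^2,
\]
and the growth condition on $f$ combined with $r\leq R_0$ forces $r^4\leq C\,|f''(\ln r)|$ on the support of $u$. Consequently this quantity is bounded by $C\|W\|_\infty^2\,\|\sqrt{|f''(\ln r)|}\,e^{\tau\phi}u\,r^{-n/2}\|^2$, which is absorbed by the $\tau^3|f''|$-term on the right-hand side precisely under the hypothesis $\tau\geq C_1\|W\|_\infty^{2/3}+C_2$. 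After fixing $R_0$ small enough to dominate the curvature remainders and $C_2$ large enough to dominate the lower-order remainders, the Carleman estimate \eqref{Si} follows. The sensitive point throughout is to keep every error term strictly subordinate to the three positive contributions one wishes to preserve, which is exactly what the joint conditions \eqref{f} are tailored to guarantee.
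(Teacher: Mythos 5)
Your overall strategy for the two ``main'' terms (the $\tau^3$ term and the gradient term) is the textbook form of what the paper does by hand: the paper does not literally write $P_\tau=S_\tau+A_\tau$ and compute $\langle[S_\tau,A_\tau]v,v\rangle$, but its decomposition $I=I_1+I_2+I_3$ with the cross term $I_3$ estimated by integration by parts is the same mechanism, and your treatment of the potential (Cauchy--Schwarz, $r^4\le C|f''(\ln r)|$ from the last condition in \eqref{f}, absorption into the $\tau^3$ term under $\tau\ge C_1\|W\|_\infty^{2/3}+C_2$) is exactly the paper's. Two points, however, do not survive scrutiny.

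First, and this is the genuine gap: your explanation of where the middle term $\tau^2\delta\,\|r^{-1/2}e^{\tau\phi}u r^{-n/2}\|^2$ comes from is wrong. Since $u\in\Cc^\infty_0\bigl(B_{R_0}(x_0)\setminus B_\delta(x_0)\bigr)$, the function vanishes identically near $r=\delta$, so integration by parts produces \emph{no} boundary contribution at $t=\ln\delta$; and the commutator $[S_\tau,A_\tau]$ contains no term scaling like $\tau^2\delta e^{-t}$. In the paper this term is extracted from the square of the skew part that your identity retains, namely $\|A_\tau v\|^2\sim 4\tau^2\|f'\partial_t v\|^2$, via a one-dimensional weighted Poincar\'e inequality: writing $\int\partial_t(v^2)e^{-t}=2\int v\,\partial_t v\,e^{-t}$ and integrating by parts on the left gives $\int v^2e^{-t}\le C\int|\partial_t v|^2e^{-t}\le C e^{-T_1}\int|\partial_t v|^2$ with $T_1=\ln\delta$, i.e.\ $\delta\int v^2 e^{-t}\le C\int|\partial_t v|^2$; here the support condition $r\ge\delta$ enters only through the bound $e^{-t}\le\delta^{-1}$, not through boundary terms. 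Without this step your argument simply does not produce the $\tau^2\delta$ term.

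Second, a smaller inaccuracy: the commutator does not ``produce exactly'' $\tau|f''|\,|\nabla v|^2$. In the model product case $[\Delta_\omega,2\tau f'\partial_t]=0$ because $f'$ is radial, so the angular part of the gradient term is not a commutator term at all; naively pairing $\Delta_\omega v$ with $2\tau f'\partial_t v$ against the flat measure even gives $2\tau\int f''|D_\theta v|^2\le 0$, the wrong sign. The paper circumvents this by working with the modified inner product $\langle u,v\rangle_f=\int uv\sqrt{\gamma}(f')^{-3}dt\,d\theta$, whose weight makes $\partial_t\bigl(f'(f')^{-3}\bigr)=-2f''(f')^{-3}\ge 0$ and turns this contribution positive (the estimate of $J_2$). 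Alternatively one can recover $\tau\int|f''||D_\theta v|^2$ a posteriori from $\|S_\tau v\|^2$ and the $\tau^3$ term, but either way an extra device is needed beyond the bare commutator identity.
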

\noindent The following lemma contains the crucial part of theorem \ref{ticsl}:
\begin{lm}\label{lm}
There exist positive constants $R_0, C,C_1$, which depend only on  $M$ and $f$, such that, 
for any  $x_0\in M$, \ any  $u\in C^\infty_0\left(B_{R_0}(x_0)\setminus \{x_0\})\right)$ and any
 $\tau \geq C_1$, one has    
\begin{equation}\label{SS}
\begin{split}
C\left\| r^2e^{\tau\phi}|\Delta u|r^{-n/2}\right\|^2 
\geq
&  {}\ 
\tau^3\left\|\sqrt{|f^{\prime\prime}(\mathrm{ln}r)|}e^{\tau\phi}ur^{-n/2}\right\|^2
 \\
 &
+
  \tau\left\| r\sqrt{|f^{\prime\prime}(\mathrm{ln}r)|}e^{\tau\phi}|\nabla u|r^{-n/2}\right\|^2.
\end{split}
\end{equation}
 
\end{lm}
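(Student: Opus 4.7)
The plan is to conjugate $r^2\Delta$ by the weight $e^{\tau\phi}$ on a logarithmic cylinder and exploit the convexity of $\phi$ in the new variable via the classical symmetric/antisymmetric Carleman identity.

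I would first pass to geodesic normal coordinates $(r,\theta)\in(0,R_0)\times S^{n-1}$ at $x_0$, in which
\[
\Delta \;=\; \partial_r^2 + \tfrac{n-1}{r}\partial_r + \tfrac{1}{r^2}\Delta_\theta + E ,
\]
where $\Delta_\theta$ is the Laplacian on the unit sphere and $E$ collects the deviation from the Euclidean model, with coefficients of order $O(1)$. After the substitution $t=\ln r$ and $\tilde u(t,\theta)=u(e^t\theta)$, the measure $v_g\,r^{-n}$ reduces to $dt\,d\theta$ up to a smooth factor tending to $1$ at $t=-\infty$, and
\[
r^2\Delta u \;=\; \partial_t^2\tilde u + (n-2)\partial_t\tilde u + \Delta_\theta\tilde u + \tilde E\tilde u .
\]
Setting $v=e^{-\tau f(t)}\tilde u$ produces the conjugated operator $L_\tau v = e^{-\tau f}(r^2\Delta u)$ on the cylinder, and every weighted norm in \eqref{SS} becomes an ordinary $L^2(dt\,d\theta)$ norm of $v$ or of one of its derivatives (up to the same bounded factor).

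Second, I would split $L_\tau = S + A$ into its $L^2(dt\,d\theta)$-symmetric and antisymmetric parts,
\[
S = \partial_t^2 + \Delta_\theta + \tau^2(f')^2 + (n-2)\tau f', \qquad A = (2\tau f'+n-2)\partial_t + \tau f'' ,
\]
so that $\|L_\tau v\|^2 = \|Sv\|^2 + \|Av\|^2 + \mathrm{Re}\,\langle[S,A]v,v\rangle$. A direct computation (in which all occurrences of $\Delta_\theta$ drop out because the coefficients depend only on $t$) yields
\[
[S,A] \;=\; 4\tau f''\partial_t^2 + 4\tau f'''\partial_t + \tau f^{(4)} + 4\tau^3(-f'')(f')^2 + 4(n-2)\tau^2(-f'')f' + (n-2)^2\tau(-f'') .
\]
Integrating by parts once on the $\partial_t^2$ piece gives
\[
\mathrm{Re}\,\langle[S,A]v,v\rangle \;=\; 4\tau\!\!\int(-f'')|\partial_t v|^2 + 4\tau^3\!\!\int(-f'')(f')^2|v|^2 + \text{l.o.t.},
\]
where the lower-order terms involve only $|f^{(3)}|, |f^{(4)}|$ and $|f'|\cdot|f''|$. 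Using $f'\ge\mu_1$ and $\mu_3|f'''|\le -f''$, these are absorbed into the principal $\tau^3$ term as soon as $\tau\ge C_1$. Since $\|Sv\|^2$ and $\|Av\|^2$ are non-negative, this already proves both the mass estimate and the radial part of the gradient estimate in \eqref{SS}.

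Third, the commutator carries no $|\nabla_\theta v|^2$ information, so to recover the angular part of the gradient I would compute directly
\[
\int(-f'')|\nabla_\theta v|^2\,dt\,d\theta \;=\; -\mathrm{Re}\!\!\int(-f'')\,v\,\overline{\Delta_\theta v},
\]
substitute $\Delta_\theta v = L_\tau v - \partial_t^2 v - (2\tau f'+n-2)\partial_t v - [\tau^2(f')^2 + (n-2)\tau f' + \tau f'']v$, and integrate by parts in $t$ on the terms containing $\partial_t v$ or $\partial_t^2 v$. The clean cancellation $\tau\!\int(-f'')(\tau f'')|v|^2 + \tau\!\int(f'')^2|v|^2 = 0$ then occurs, and the surviving identity takes the form
\[
\tau\!\!\int(-f'')(|\partial_t v|^2+|\nabla_\theta v|^2) \;\le\; \tfrac12\|L_\tau v\|^2 + \tau^3\!\!\int(-f'')(f')^2|v|^2 + C\tau^2\!\!\int(-f'')|v|^2 .
\]
Feeding in the mass estimate from the previous step bounds the right-hand side by a multiple of $\|L_\tau v\|^2$, which produces the full gradient contribution of \eqref{SS}. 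Translating back via $v=e^{-\tau f}\tilde u$ and reversing the change of variables gives the lemma.

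The main obstacle will be the careful bookkeeping of the many lower-order terms produced by the repeated integrations by parts: every derivative of $f$ up to order four appears, and one must show that each is dominated by $\tau^3$ times a positive multiple of $\int(-f'')|v|^2$. The role of the third hypothesis on $f$, namely $-e^{-t}f''(t)\to+\infty$ as $t\to-\infty$, is to ensure that the curvature error $\tilde E\tilde u$, whose coefficients are of size $O(1)$ on the cylinder and hence multiply $v$ rather than $(-f'')v$, is itself dominated by $(-f'')$ near the singular point $x_0 \leftrightarrow \{t=-\infty\}$; this is precisely what keeps the estimate uniform down to $r=0$.
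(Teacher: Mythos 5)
Your strategy is correct in outline and would prove the lemma, but it takes a genuinely different route from the paper's. The paper also conjugates on the logarithmic cylinder, but then works with a skewed inner product $\langle u,v\rangle_f=\int uv\,\sqrt{\gamma}\,(f')^{-3}\,dt\,d\theta$ rather than the flat one, and splits $\|L_\tau u\|_f^2\ge\tfrac12 I-I\!I$ with $I=I_1+I_2+I_3$, where $I_1$ is the square of the ``elliptic plus potential'' block, $I_2=\|2\tau f'\partial_t u\|_f^2$ and $I_3$ is the cross term. Because of the weight $(f')^{-3}$, the order-$\tau^3$ coefficient produced by the cross term in front of $\partial_t(|u|^2)$ is the constant $2\tau^3$, so after integration by parts the cross term yields \emph{no} positive $\tau^3$ mass term (only the angular gradient term, from $J_2$, plus errors); the term $\tau^3\int|f''|\,|u|^2$ is instead extracted from the square of the symmetric block via $I_1\ge(\rho/\tau)\,\|\sqrt{|f''|}(\,\cdots)\|_f^2$ and the lower bound on $K_2$. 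In your version the positivity lives in the commutator itself, through $4\tau^3(-f'')(f')^2$, and the angular gradient is recovered separately by pairing the equation with $(-f'')v$. Yours is the more standard symmetric/antisymmetric presentation; the paper's weight trick buys a cross term that is almost entirely a gradient term, at the price of a less transparent source for the $\tau^3$ bound.

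Two points in your write-up need repair. First, your commutator contains $\tau f^{(4)}$, but $f$ is only assumed $\mathcal{C}^3$ and the hypotheses \eqref{f} control nothing beyond $f^{(3)}$; you must either integrate by parts only once in the cross term (as the paper does, which produces at worst $f^{(3)}$, controlled by $\mu_3|f^{(3)}|\le -f''$), or observe that $[\partial_t^2,\tau f'']$ is antisymmetric so that $\mathrm{Re}\langle[\partial_t^2,\tau f'']v,v\rangle=0$ and the fourth derivative never actually appears. Second, the claim that ``all occurrences of $\Delta_\theta$ drop out because the coefficients depend only on $t$'' holds only in the flat model: on the manifold $[\Delta_\theta,2\tau f'\partial_t]\neq 0$ since $\gamma^{ij}$ depends on $t$, and this produces an error of size $\tau e^t|D_\theta v|^2$ (this is exactly the $\partial_t(\gamma^{ij})$ term in the paper's $J_2$). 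That error can only be absorbed into the positive term $\tau\int|f''|\,|D_\theta v|^2$, which in your scheme is obtained only at the third step; so the absorption must be done after that step, and it uses the hypothesis $e^t\ll|f''|$ in an essential way, not merely for the zeroth-order curvature error you mention at the end.
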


\begin{Rq}
The exponent $3$ on $\tau$ in lemma \ref{lm} will play a important role in the following (compare to \cite{DF1}).
 Like in \cite{DF1} the important statement in theorem \ref{ticsl} is the one involving the parameter 
$\tau$ : $$\tau\geq C_1\|W\|^{\frac{2}{3}}+C_2.$$
\end{Rq}

\begin{proof}[proof of lemma \ref{lm}]
Without loss of generality, we  may suppose that all functions are real.   
We now introduce the polar geodesic coordinates $(r,\theta)$ near $x_0$. Using Einstein notation, the Laplace operator takes the form : 
$$r^2\Delta u=r^2\partial_r^2u+r^2\left(\partial_r\ln(\sqrt{\gamma})+\frac{n-1}{r}\right)\partial_ru+
\frac{1}{\sqrt{\gamma}}\partial_i(\sqrt{\gamma}\gamma^{ij}\partial_ju),$$
where $\displaystyle{\partial_i=\frac{\partial}{\partial\theta_i}}$ and for each fixed $r$,  $\ \gamma_{ij}(r,\theta)$  
is a metric on \: $\Sb^{n-1}$ and $\displaystyle{\gamma=\mathrm{det}(\gamma_{ij})}$.\newline
Since $(M,g)$ is smooth, we have for $r$ small enough :  
\begin{eqnarray}\label{m1}
\partial_r(\gamma^{ij})&\leq& C (\gamma^{ij})\ \ \ \mbox{(in the sense of tensors)}; 
\nonumber \\
|\partial_r(\gamma)|&\leq& C;\\
C^{-1}\leq\gamma&\leq& C.\nonumber 
 \end{eqnarray}
\non Set  $r=e^t$, 
the function $u$ is then supported in $]-\infty,T_0[\times\mathbb{S}^{n-1},$ 
where $|T_0|$ will be chosen large enough. In this new variables, we can write : 
$$e^{2t}\Delta u=\partial_t^2u+(n-2+\partial_t\mathrm{ln}\sqrt{\gamma})\partial_tu
+\frac{1}{\sqrt{\gamma}}\partial_i(\sqrt{\gamma}\gamma^{ij}\partial_ju).$$
The conditions (\ref{m1}) become
\begin{eqnarray}\label{m2}
\partial_t(\gamma^{ij})&\leq& Ce^t (\gamma^{ij})\nonumber\ \ \ \mbox{(in the sense of tensors)};  \\
|\partial_t(\gamma)|&\leq& Ce^t;\\
C^{-1}\leq\gamma &\leq & C. \nonumber
\end{eqnarray}
It will be useful to note that this properties imply 
\begin{equation}\label{ln}|\partial_t\ln\sqrt{\gamma}|\leq  Ce^t
                           \end{equation}
Now we introduce the conjugate operator : 
\begin{equation}\begin{array}{rcl}
L_\tau(u)&=&e^{2t}e^{\tau\phi}\Delta(e^{-\tau\phi}u)\\
               &=&\partial^2_tu+\left(2\tau f^\prime+n-2+\partial_t\mathrm{ln}\sqrt{\gamma}\right)\partial_tu\\
               &+&\left(\tau^2f^{\prime^2}+(n-2)\tau f^{\prime}+\tau f^{\prime\prime}+\tau\partial_t\mathrm{ln}\sqrt{\gamma}f^{\prime}\right)u\\
               &+&\Delta_\theta u,
     \end{array}\end{equation}
with $$\Delta_\theta u=\frac{1}{\sqrt{\gamma}}\partial_i\left(\sqrt{\gamma}\gamma^{ij}\partial_ju\right).$$

\noindent It will be useful for us to introduce the following $L^2$ norm on $]-\infty,T_0[\times\Sb^{n-1} $: 
$$\|V\|_f^2=\int_{]-\infty,T_0[\times\Sb^{n-1}} |V|^2\sqrt{\gamma}{f^{\prime}}^{-3}dtd\theta,$$ where $d\theta$
 is the usual measure on $\Sb^{n-1}$.
The corresponding inner product is denoted by  $\left\langle\cdot,\cdot\right\rangle_f$ , 
\emph{i.e} $$\langle u,v\rangle_f = \int uv\sqrt{\gamma}{f^{\prime}}^{-
3}dtd\theta.$$ 
\noindent We will estimate from below $\|L_\tau u\|^2_f$ by using elementary algebra and integrations by parts. 
We are concerned, in the computation, by the power of $\tau$ 
and  exponenial decay when $t$ goes to $-\infty$. First by triangular inequality one has
\begin{equation}
\|L_\tau(u)\|_f^2\geq \frac{1}{2}I-I\!I ,
\end{equation}
with 
\begin{equation}\begin{array}{rcl}
I&=& \left\|\tau^2f^{\prime^2}u+(n-2)\tau f^\prime u+\partial^2_tu
+2\tau f^\prime\partial_tu+\Delta_\theta u\right\|^2_f,\vspace{0,2cm}\\
I\!I&=&\left\|\tau f^{\prime\prime}u+ \partial_t\mathrm{ln}\sqrt{\gamma}f^{\prime}u+(n-2)\partial_tu+\partial_t\ln\sqrt{\gamma}\partial_tu\right\|^2_f.
\end{array}\end{equation}
We will now derive a lower bound of $I$. ( We will able to absorb $I\!I$ in $I$ later.) 
To compute $I$ we write :

$$I=I_1+I_2+I_3,$$ with 

\begin{equation}\begin{array}{rcl}
I_1&=&\left\|\partial^2_tu+(\tau^2f^{\prime^2}+(n-2)\tau f^\prime ) u+\Delta_\theta u\right\|_f^2\vspace{0,2cm}\\
I_2&=&\left\|2\tau f^\prime\partial_tu\right\|_f^2\vspace{0,2cm}\\
I_3&=&2\left<2\tau f^{\prime}\partial_tu\ ,\ 
\partial^2_tu+(\tau^2f^{\prime^2}+(n-2)\tau f^\prime )u+\Delta_\theta u\right>_f\label{I}
\end{array}\end{equation}
           
\non In order to make explicit the estimation of $I_3$ we write it in a convenient way: 
\begin{equation}
 I_3=J_1+J_2+J_3
\end{equation}
where, using $2\partial_tu\partial_t^2u=\partial_t(|\partial_tu|^2)$, the integrals $J_i$ are defined by :
\begin{eqnarray}\label{I3}
J_1&=& \int (2\tau f^\prime) \partial_t(|\partial_tu|^2)f^{\prime^{-3}}\sqrt{\gamma}\d t\d\theta,\\
J_2&=&4\int \tau f^\prime\partial_tu\partial_i\left(\sqrt{\gamma}\gamma^{ij}\partial_ju\right)f^{\prime^{-3}}\d t\d\theta,\\
J_3&=&\int\left(2\tau^3+2(n-2)\tau^2f^{\prime^{-1}}\right) \partial_t(|u|^2)\sqrt{\gamma}\d t\d\theta,
\end{eqnarray}
\noindent Now we will use integration by parts to estimate $J_i$. 
We recall that $f$ is radial. We find that  :
\begin{equation*}\begin{array}{rcl}
J_1&=& 4\tau \int f^{\prime\prime}|\partial_tu|^2f^{\prime^{-3}}\sqrt{\gamma}dtd\theta\\
&-&\int2\tau f^\prime\partial_t\mathrm{ln}\sqrt{\gamma}|\partial_{t}u|^2f^{\prime^{-3}}\sqrt{\gamma}dtd\theta.\end{array}
\end{equation*}
Recall that \eqref{m2} implies that $|\partial_t\ln\sqrt{\gamma}|\leq Ce^t$. Then properties \eqref{f} on $f$ gives, for large $|T_0|$ 
that $|\partial_t\ln\sqrt{\gamma}|$ is small compared to $|f^{\prime\prime}|$. Therefore one has   
\begin{equation}\label{J_1}J_1\geq -5\tau\int |f^{\prime\prime}|\cdot|\partial_tu|^2f^{\prime^{-3}}\sqrt{\gamma}dtd\theta.\end{equation}

\noindent 
In order to estimate $J_2$ we first integrate by parts with respect to $\partial_i$ : 
\begin{equation*}\begin{array}{rcl}
J_2
   &=&-2\int2\tau f^{\prime}\partial_t\partial_iu\gamma^{ij}\partial_juf^{\prime^{-3}}\sqrt{\gamma}dtd\theta.                  
     \end{array}
\end{equation*}            
Then we integrate by parts with respect to $\partial_t$. We get : 
\begin{equation*}\begin{array}{rcl}
J_2&=&-4\tau\int f^{\prime\prime}\gamma^{ij}\partial_iu\partial_juf^{\prime^{-3}}\sqrt{\gamma}dtd\theta\\
&+&\int2\tau f^{\prime}\partial_t\mathrm{ln}\sqrt{\gamma}\gamma^{ij}\partial_iu\partial_juf^{\prime^{-3}}\sqrt{\gamma}dtd\theta\\
&+&\int2\tau f^\prime\partial_t(\gamma^{ij})\partial_iu\partial_juf^{\prime^{-3}}\sqrt{\gamma}dtd\theta.
\end{array}
\end{equation*}
We denote  $|D_\theta u|^2=\partial_iu\gamma^{ij}\partial_ju$. Now using that $-f^{\prime\prime}$ is non-negative and $\tau$ is large, 
the conditions \eqref{f}  and \eqref{m2} gives for $|T_0|$ large enough:
\begin{equation}\label{J_2}
 J_2\geq 3\tau\int|f^{\prime\prime}|\cdot|D_\theta u|^2f^{\prime^{-3}}\sqrt{\gamma}dtd\theta.
\end{equation}

Similarly computation of $J_3$ gives :
\begin{equation}\label{J_31}\begin{split}
J_3&=-\int (2\tau^3+2(n-2)\tau^2f^{\prime^{-1}})\partial_t\mathrm{ln}\sqrt{\gamma}|u|^2\sqrt{\gamma}dtd\theta\\
& {} +2\int(n-2)\tau^2f^{\prime\prime}f^{\prime}|u|^2f^{\prime^{-3}}\sqrt{\gamma}dtd\theta
\end{split}\end{equation}
 From  \eqref{f} , \eqref{m2} we have provided that $\tau$ and $|T_0|$ are large enough :\newline
\begin{equation}\label{J_3}\begin{split}
J_3&\geq-3\tau^3\int e^t|u|^2f^{\prime^{-3}}\sqrt{\gamma}dtd\theta\\
& -c\tau^2\int |f^{\prime\prime}|\cdot|u|^2f^{-3}\sqrt{\gamma}\d t d\theta
\end{split}\end{equation}
%

 \noindent Thus far, using \eqref{J_1},\eqref{J_2} and \eqref{J_3}, we have : 
 \begin{equation}\label{I_3}\begin{array}{rcl}
 I_3 &\geq & 4\tau \int\left|f^{\prime\prime}\right|\left|D_\theta u\right|^2f^{\prime^{-3}}\sqrt{\gamma}dtd\theta -c\tau^3\int e^t|u|^2f^{\prime^{-3}}\sqrt{\gamma}dtd\theta\\
 &-&c\tau\int\left|f^{\prime\prime}\right|\left|\partial_t u\right|^2f^{\prime^{-3}}\sqrt{\gamma}dtd\theta -c\tau^2\int |f^{\prime\prime}|\cdot|u|^2f^{-3}\sqrt{\gamma}\d t d\theta.
 \end{array}\end{equation}

\noindent Now we consider  $I_1$ :
$$I_1=\left\|\partial^2_tu+(\tau^2
f^{\prime^2}+(n-2)\tau {f^\prime})u+\Delta_\theta u\right\|_f^2 .$$
Let $\rho>0$ a small number to be chosen later. Since  $|f^{\prime\prime}|\leq\mu_4$ and $\tau\geq1$, we have : \newline
\begin{equation}I_1\geq\frac{\rho}{\tau}I_1^\prime,\end{equation}
where $I_1^\prime$ is defined by :
\begin{equation}I_1^\prime=\left\|\sqrt{|f^{\prime\prime}|}\left[\partial^2_tu+(\tau^2f^{\prime^2}+(n-2)\tau f^\prime)u+\Delta_\theta u\right]\right\|_f^2 \end{equation}
and one has \begin{equation}I_1^\prime=K_1+K_2+K_3,\end{equation} 
with 
\begin{equation}\label{Ki}
\begin{array}{rcl}
  K_1&=&\left\|\sqrt{|f^{\prime\prime}|}\left(\partial_t^2u+\Delta_\theta u\right)\right\|_f^2, \\
K_2&=&\left\|\sqrt{|f^{\prime\prime}|}(\tau^2
f^{\prime^2}+(n-2)\tau {f^\prime})u\right\|_f^2,  \\
K_3&=&2\left\langle\left(\partial_t^2u+\Delta_\theta u\right)\left|f^{\prime\prime}\right|,(\tau^2
f^{\prime^2}+(n-2)\tau {f^\prime})u\right\rangle_f.
\end{array}\end{equation}
We first estimate $K_3$. We set 
$$A:=A(t)=(\tau^2
f^{\prime^2}f^{\prime\prime}+(n-2)\tau {f^\prime}f^{\prime\prime}).$$ 
Using properties \eqref{f}, we notice the following estimates 
\begin{align}\label{A1}
 |A|\leq {} & C\tau ^2|f^{\prime\prime}| \\
\label{A2}|\partial_t A|\leq {} & C\tau^2 |f^{\prime\prime}|
\end{align}
\noindent 
Now, recall that $|f^{\prime\prime}|=-f^{\prime\prime}$, we can write 
$$K_3=-2\int A(\partial_t^2u+\Delta_\theta u)uf^{\prime^{-3}}\sqrt{\gamma}\d t\d \theta$$

Integrating by parts gives : 
\begin{equation}\label{K_3}\begin{array}{rcl}
K_3&=&2\int A|\partial_tu|^2f^{\prime^{-3}}\sqrt{\gamma}dtd\theta\\
&+&2\int \partial_tA\partial_tuuf^{\prime^{-3}}\sqrt{\gamma}dtd\theta \\
&-&6\int A(t)f^{\prime\prime}f^{\prime^{-1}}\partial_tuuf^{\prime^{-3}}\sqrt{\gamma}dtd\theta \\
&+&2\int A(t)\partial_t\mathrm{ln}\sqrt{\gamma}\partial_tuuf^{\prime^{-3}}\sqrt{\gamma}dtd\theta\\
&+&2\int A(t)|D_\theta u|^2f^{\prime^{-3}}\sqrt{\gamma}dtd\theta\\
\end{array}\end{equation}
 
Now since  $2\partial_tuu\leq u^2+|\partial_tu|^2$, we can  use properties on $f$ \eqref{f}, $\gamma$ \eqref{m2} and $A$ (\ref{A1},\ref{A2}) to get

\begin{equation}
 K_3\geq-c\tau^2\int|f^{\prime\prime}|\left(|\partial_tu|^2+|D_\theta u|^2+|u|^2\right)f^{\prime^{-3}}\sqrt{\gamma}dtd\theta\\
\end{equation}
To estimate $K_2$ \eqref{Ki}, we notice that

\begin{align}
K_2\geq  & {} c_1\tau^4\|\sqrt{|f^{\prime\prime}|}|u|\|^2_f 
- c_2\tau^2\|\sqrt{|f^{\prime\prime}}|u\|^2_f\nonumber\\ 
K_2 \geq & {} C\tau^4\|\sqrt{|f^{\prime\prime}|}u\|^2_f
\end{align}
and since  $K_1\geq 0$ ,  
\begin{equation}\label{I_1}\begin{array}{rcl}
I_1&\geq&-\rho c\tau\int|f^{\prime\prime}|\left(|\partial_t u|^2+|D_\theta u|^2\right)f^{\prime^{-3}}\sqrt{\gamma}dtd\theta\\
&+&C\tau^{3}\rho\int|f^{\prime\prime}||u|^2f^{\prime^{-3}}\sqrt{\gamma}dtd\theta.
\end{array}\end{equation}

\noindent Then using  \eqref{I_3} and \eqref{I_1} 
\begin{multline}\label{pro}
I^2  \geq  4\tau^2\| f^\prime\partial_tu\|_f^2+4\tau\int|f^{\prime\prime}||D_\theta u|^2f^{\prime^{-3}}\sqrt{\gamma}dtd\theta \\
 + C\tau^{3}\rho\int|f^{\prime\prime}||u|^2f^{\prime^{-3}}\sqrt{\gamma}dtd\theta-c\tau^3\int e^t|u|^2f^{\prime^{-3}}\sqrt{\gamma}dtd\theta\\
-\rho c\tau\int|f^{\prime\prime}|\left(|u|^2+|\partial_tu|^2+|D_\theta u|^2\right)f^{\prime^{-3}}\sqrt{\gamma}dtd\theta\\
-c\tau^2\int |f^{\prime\prime}|\cdot|u|^2f^{-3}\sqrt{\gamma}\d t d\theta.\phantom{aaaaaaaaaaaaaaaa}
 \end{multline}

\noindent
Now one needs to check that every non-positive terms in the right hand side of \eqref{pro} can be absorbed in the first three terms. 
\\ First fix $\rho$ small enough such that $\displaystyle{\rho\leq\frac{2}{c}}$, where $c$ is the constant appearing in \eqref{pro},
 then we obviously have
$$\rho c\tau\int|f^{\prime\prime}|\cdot|D_\theta u|^2{f^{\prime}}^{-3}\sqrt{\gamma}dtd\theta\leq
 2\tau\int|f^{\prime\prime}|\cdot|D_\theta u|^2f^{\prime^{-3}}\sqrt{\gamma}dtd\theta.$$
 But the following integral  
$$ -\rho c\tau\int|f^{\prime\prime}|\left(|u|^2+|\partial_tu|^2\right)f^{\prime^{-3}}\sqrt{\gamma}dtd\theta,$$
 can also  be absorbed by comparing powers of $\tau$ of the positive terms. 
Finally since conditions \eqref{f} imply that $e^t$ is small compared to $|f^{\prime\prime}|$, 
we can absorb   $-c\tau^3e^t|u|^2$ in $C\tau^{3}\rho|f^{\prime\prime}||u|^2$.

\noindent 
\begin{equation}\label{ssu}\begin{array}{rcl}
 I &\geq &C\tau^2\int|\partial_t u|^2f^{\prime^{-3}}\sqrt{\gamma}dtd\theta+C\tau\int|f^{\prime\prime}||D_\theta u|^2f^{\prime^{-3}}\sqrt{\gamma}dtd\theta\\
&+ &C\tau^{3}\int|f^{\prime\prime}||u|^2f^{\prime^{-3}}\sqrt{\gamma}dtd\theta
\end{array}\end{equation}

\noindent We can now check that $I\!I$ can be absorbed in $I$. Since for  $|T_0|$ and $\tau$ large enough, we have : 
$$I\!I \leq c(\tau^2\|f^{\prime\prime}u\|^2_f+\|e^tu\|_f^2+\|\partial_tu\|^2_f+\|e^t\partial_tu\|_f^2),$$ 
All this terms can easily by absorbed in \eqref{ssu}.\\
Then we obtain \begin{equation}\|L_\tau u\|_f^2\geq C\tau^3\|\sqrt{|f^{\prime\prime}|} u\|_f^2+C\tau^2\|\partial_t u\|_f^2
+C\tau\|\sqrt{|f^{\prime\prime}|}\:|D_\theta u|\|_f^2 .\end{equation}
The condition   $\mu_4\geq |f^{\prime\prime}|$ force
 $\|\partial_tu\|^2_f\geq \mu_4^{-1}\|\sqrt{|f^{\prime\prime}|}\partial_tu\|^2_f$, then replacing $\tau^2$ by $\tau$ in the term involving $\partial_tu$ we can state that
\begin{equation}\|L_\tau u\|_f^2\geq C\tau^3\|\sqrt{|f^{\prime\prime}|} u\|_f^2+C\tau\|\sqrt{|f^{\prime\prime}|}\partial_t u\|_f^2
+C\tau\|\sqrt{|f^{\prime\prime}|}\:|D_\theta u|\|_f^2 .\end{equation}
Then  we set  $u=e^{\tau\phi}v$. We first note that 
$$\tau\|\sqrt{|f^{\prime\prime}|}\partial_t(e^{\tau\phi}v)\||_f^2\geq\frac{\tau}{2}\|\sqrt{|f^{\prime\prime}|}e^{\tau\phi}\partial_tv\|_f^2-\mu_2\tau^3\|\sqrt{|f^{\prime\prime}|}e^{\tau\phi}v\|_f^2 $$
 We can deduce from the definition of $\phi$, for $C$ large enough that 
\begin{equation}\label{ssu2}\begin{array}{rcl}\|e^{2t}e^{\tau\phi}(\Delta v)\|_f^2&\geq& C\tau^3\|\sqrt{|f^{\prime\prime}|}e^{\tau\phi} v\|_f^2\\
+
C\tau\|e^{\tau\phi}\partial_t v\|_f^2&+&C\tau\|\sqrt{|f^{\prime\prime}|}e^{\tau\phi} D_\theta v\|_f^2\end{array}.\end{equation}
It remains  to compare $\|\cdot\|_f$ to the usual $L^2$ norm. First note that since $0<\mu_1\leq f^\prime\leq\mu_2$,
\eqref{ssu2} we can drop the term $(f^\prime)^{-3}$ in the integrals of \eqref{ssu2} . Recall that in polar coordinates $(r,\theta)$ the volume element 
is $r^{n-1}\sqrt{\gamma}drd\theta$ and that $r=e^t$, we can deduce from \eqref{ssu2} that : 
\begin{equation}
 \begin{array}{rcl}
  \|r^2e^{\tau\phi}\Delta vr^{-\frac{n}{2}}\|^2&\geq&C\tau^3\|\sqrt{|f^{\prime\prime}|}e^{\tau\phi}vr^{-\frac{n}{2}}\|^2\\
&+&C\tau\|r\sqrt{|f^{\prime\prime}|}e^{\tau\phi}|\nabla v|r^{-\frac{n}{2}}\|^2.
 \end{array}
\end{equation}

\non This achieves the  proof of lemma \ref{lm}.\end{proof}

\begin{proof}[proof of theorem \ref{ticsl}] 

Now  we additionaly suppose that  $\mathrm{supp}(u)\subset\{x\in M; r(x)\geq\delta>0\}$ and define $T_1=\ln\delta$.\newline

\noindent Cauchy-Schwarz inequality apply to   $$\int\partial_t(u^2)e^{-t}\sqrt{\gamma}dtd\theta=2\int u\partial_tue^{-t}\sqrt{\gamma}dtd\theta$$ 
gives
\begin{equation}\label{d1}
 \int\partial_t(u^2)e^{-t}\sqrt{\gamma}dtd\theta\leq 2\left(\int\left(\partial_tu\right)^2e^{-t}\sqrt{\gamma}dtd\theta \right)^{\frac{1}{2}}
\left(\int u^2e^{-t}\sqrt{\gamma}dtd\theta\right)^{\frac{1}{2}}.
\end{equation}
On the other hand, integrating by parts gives
 \begin{equation}
      \int\partial_t(u^2)e^{-t}\sqrt{\gamma}dtd\theta = \int u^2e^{-t}\sqrt{\gamma}dtd\theta-\int u^2e^{-t}\partial_t(\ln(\sqrt{\gamma})\sqrt{\gamma}dtd\theta.
     \end{equation}
Now since  $|\partial_t\ln\sqrt{\gamma}|\leq Ce^t$ for $|T_0|$ large enough we can deduce :
\begin{equation}\label{d2}
 \int\partial_t(u^2)e^{-t}\sqrt{\gamma}dtd\theta \geq c  \int u^2e^{-t}\sqrt{\gamma}dtd\theta.
\end{equation}
Combining \eqref{d1} and  \eqref{d2} gives  
\begin{eqnarray*}\label{prout}
 c^2 \int u^2e^{-t}\sqrt{\gamma}dtd\theta&\leq& 4\int\left(\partial_tu\right)^2e^{-t}\sqrt{\gamma}dtd\theta\\
&\leq&4e^{-T_1}\int\left(\partial_t u\right)^2\sqrt{\gamma}dtd\theta.\end{eqnarray*}

\noindent Finally, droping all terms except   $\|2\tau f^\prime\partial_tu\|^2$ in \eqref{ssu}  gives :

\begin{eqnarray*}
C^{\prime}I^2\geq \tau^2 \delta^2 \|r^{-1}u\|^2.
\end{eqnarray*}
Inequality \eqref{ssu} can then be replaced by :  
\begin{equation}\begin{array}{rcl}
 I^2 &\geq &C_1\tau\int|\partial_t u|^2f^{\prime^{-3}}\sqrt{\gamma}dtd\theta+C_1\tau\int|f^{\prime\prime}|\cdot|D_\theta u|^2f^{\prime^{-3}}\sqrt{\gamma}dtd\theta\\
&+ &C_1\tau^{3}\int|f^{\prime\prime}|\cdot|u|^2f^{\prime^{-3}}\sqrt{\gamma}dtd\theta+C_1\tau^2 \delta^2\int|u|^2{f^\prime}^{-3}\sqrt{\gamma}dtd\theta.
\end{array}\end{equation}
Then  following the end of the proof of lemma \ref{lm}, we can state that for $\tau\geq C_2$ with $C_2$ great enough and depending only 
on $(M,g)$ the following estimate holds: 
\begin{equation}\label{rouge}
\begin{split}
C\left\|r^2e^{\tau\phi}\Delta u r^{-n/2}\right\|^2
&
\geq
\tau^3\left\|\sqrt{|f^{\prime\prime}(\mathrm{ln}\:r)|}e^{\tau\phi}ur^{-n/2}\right\|^2
\\
+\ \tau^2\delta\left\| r^{-\frac{1}{2}}e^{\tau\phi}ur^{-n/2}\right\|^2
&
+
\tau\left\| r\sqrt{|f^{\prime\prime}(\mathrm{ln}\:r)|}e^{\tau\phi}|\nabla u|^2r^{-n/2}\right\|^2.
\end{split}\end{equation}


\non We will now derive from \eqref{rouge}, the  Carleman estimate on $\Delta+W$. If $W$ is a bounded function, one has from triangular inequality : 

\begin{multline*}C\left\|r^2e^{\tau\phi}\left(\Delta u +Wu\right)r^{-\frac{n}{2}}\right\|^2 \geq
\frac{C}{2}\left\|r^2e^{\tau\phi}\left(\Delta u +Wu\right)r^{-\frac{n}{2}}\right\|^2\\
-C\|W\|^2_\infty\cdot\left\|r^2e^{\tau\phi}ur^{-\frac{n}{2}}\right\|^2\phantom{aaaaaaaaaa}
\end{multline*}

Since for $R_0$ small enough one has $r^2\leq \sqrt{|f^{\prime\prime}(\ln r)|}$ from \eqref{f},
 assuming additionally  that $\tau^{3}\geq C_1\|W\|^2_\infty,$ we can use \eqref{rouge} to absorb the negative term and conclude the proof. 
 
\end{proof}

\subsubsection{Special choice of weight function}
In this paragraph we derive a special case of theorem \ref{ticsl} which will be useful for the remaining of this paper. Let $\varepsilon$ be a real number such that 
$0<\varepsilon<1$.  As in previous example we consider on $]-\infty,T_0]$ the function defined by 
$$f(t)=t-e^{\varepsilon t}.$$
Therefore one can easily chech that \begin{eqnarray*}
                                     & 1-\varepsilon \leq  f^{\prime}(t) \leq 1,\\
				     &\varepsilon^{-1}|f^{(3)}(t)\leq  -f^{\prime\prime}(t)\leq\varepsilon^2, \\
				     & \lim_{t\rightarrow-\infty}f^{\prime\prime}(t)e^{-t}=+\infty,
                                    \end{eqnarray*}
and then $f$ statifies the properties  \eqref{f}. Therefore we can apply theorem \ref{ticsl} with the weight function $\phi(x)=-f(\ln r)=-\ln r+r^{\varepsilon}$. 
Now we notice that 
$$e^{\tau\phi}=e^{-\tau\ln r}e^{\tau r^\varepsilon}.$$
The point is that we can now obtain a Carleman estimate with the usual $L^2$ norm. Indeed one has $$e^{\tau\phi}r^{-\frac{n}{2}}=e^{(\tau+\frac{n}{2})\phi}e^{-\frac{n}{2}r^\varepsilon}$$ 
and therefore for $r$ small enough  
 $$\frac{1}{2}e^{(\tau+\frac{n}{2})\phi}\leq e^{\tau\phi}r^{-\frac{n}{2}}\leq e^{(\tau+\frac{n}{2})\phi}.$$ 
Then we have 
\begin{cor}
 \label{tics}
 There exist positive constants $R_0, C,C_1,C_2$, which depend only on  $M$ and $\varepsilon$, such that, 
for any $\:W\in L^\infty(M)$, $x_0\in M$, any \\ $u\in \Cc^\infty_0\left(B_{R_0}(x_0)\setminus B_{\delta}(x_0)\right)$ and 
 $\tau \geq C_1\|W\|_{\infty}^{\frac{2}{3}}+C_2$, one has  
\begin{equation}\label{Siv2}\begin{array}{rcc}
C\left\|r^2e^{\tau\phi}\left(\Delta u +Wu \right)\right\|^2&\geq& \tau^3\left\|r^{\frac{\varepsilon}{2}}e^{\tau\phi}u\right\|^2 \\
+\ \tau^2\delta\left\|r^{-\frac{1}{2}}e^{\tau\phi}u\right\|^2&+& \tau\left\|r^{1+\frac{\varepsilon}{2}}e^{\tau\phi}\nabla u\right\|^2.
\end{array}\end{equation}

\end{cor}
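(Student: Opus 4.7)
The strategy is to specialize Theorem \ref{ticsl} to the concrete weight $\phi(x)=-\ln r+r^{\varepsilon}$ and then to absorb the awkward factor $r^{-n/2}$ appearing inside every $L^{2}$ norm of the statement of Theorem \ref{ticsl} into the exponential weight. Three bookkeeping steps are required: verifying the assumptions \eqref{f}, evaluating $|f''(\ln r)|$ explicitly, and rewriting the weight.

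First I would check that $f(t)=t-e^{\varepsilon t}$ satisfies \eqref{f} on $]-\infty,T_{0}[$ for $T_{0}$ sufficiently negative. A direct computation gives
\begin{equation*}
f'(t)=1-\varepsilon e^{\varepsilon t},\qquad f''(t)=-\varepsilon^{2}e^{\varepsilon t},\qquad f'''(t)=-\varepsilon^{3}e^{\varepsilon t}.
\end{equation*}
For $T_{0}$ negative enough, $f'$ lies between $1-\varepsilon$ and $1$, so we may take $\mu_{1}=1-\varepsilon$ and $\mu_{2}=1$. The identity $|f'''|=\varepsilon\,|f''|$ gives the ratio bound with $\mu_{3}=\varepsilon^{-1}$, and $|f''|$ is bounded above by $\mu_{4}=\varepsilon^{2}e^{\varepsilon T_{0}}$. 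Finally, $-e^{-t}f''(t)=\varepsilon^{2}e^{(\varepsilon-1)t}\to+\infty$ as $t\to-\infty$ because $\varepsilon<1$, so the last condition of \eqref{f} holds.

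Next I would apply Theorem \ref{ticsl} to this $f$. Since $f''(\ln r)=-\varepsilon^{2}r^{\varepsilon}$, one has $\sqrt{|f''(\ln r)|}=\varepsilon\,r^{\varepsilon/2}$, and the constant $\varepsilon$ can be absorbed into the global constants of the inequality. The statement of Theorem \ref{ticsl} then reads
\begin{equation*}
C\bigl\|r^{2}e^{\tau\phi}(\Delta+W)u\,r^{-n/2}\bigr\|^{2}
\geq \tau^{3}\bigl\|r^{\varepsilon/2}e^{\tau\phi}u\,r^{-n/2}\bigr\|^{2}
+\tau^{2}\delta\bigl\|r^{-1/2}e^{\tau\phi}u\,r^{-n/2}\bigr\|^{2}
+\tau\bigl\|r^{1+\varepsilon/2}e^{\tau\phi}|\nabla u|\,r^{-n/2}\bigr\|^{2},
\end{equation*}
valid for $\tau\geq C_{1}\|W\|_{\infty}^{2/3}+C_{2}$.

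The final step removes the factor $r^{-n/2}$ from the norms. Using $\phi=-\ln r+r^{\varepsilon}$,
\begin{equation*}
e^{\tau\phi}r^{-n/2}=r^{-\tau-n/2}e^{\tau r^{\varepsilon}}=e^{(\tau+n/2)\phi}\,e^{-(n/2)r^{\varepsilon}}.
\end{equation*}
For $R_{0}$ small enough, the factor $e^{-(n/2)r^{\varepsilon}}$ is bounded above and below by positive constants uniformly on $B_{R_{0}}$, so up to multiplicative constants $e^{\tau\phi}r^{-n/2}$ can be replaced by $e^{(\tau+n/2)\phi}$ throughout the inequality. Relabeling $\tau'=\tau+n/2$ (which only shifts $C_{2}$ by $n/2$) produces exactly the inequality stated in the corollary.

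The only mildly delicate point is the last one, and it is really just a comparison of the two weights on a sufficiently small ball; once that is observed, the whole statement follows directly from Theorem \ref{ticsl}. I do not expect any serious obstacle.
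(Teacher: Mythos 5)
Your proposal is correct and follows essentially the same route as the paper: verify \eqref{f} for $f(t)=t-e^{\varepsilon t}$, note $\sqrt{|f''(\ln r)|}=\varepsilon r^{\varepsilon/2}$, and use the identity $e^{\tau\phi}r^{-n/2}=e^{(\tau+n/2)\phi}e^{-(n/2)r^{\varepsilon}}$ together with the boundedness of the last factor on a small ball to trade $e^{\tau\phi}r^{-n/2}$ for $e^{(\tau+n/2)\phi}$ and relabel $\tau$. Your write-up is in fact slightly more careful than the paper's (which leaves the final relabeling of $\tau$ implicit), and no gap remains.
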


We emphasis that  $\varepsilon$ is fixed and its value will not have any influence on ours statements as long as $0<\varepsilon<1$. 
Therefore we may and will omit the dependency of our constants on  $\varepsilon$.

\subsection{Three balls inequality}
We now want to derive from our Carleman estimate, a control on the local behaviour of solutions. We will first give and proove an Hadamard
 three circles type theorem.  To obtain such result the basic idea is to apply Carleman estimate to $\chi u$ where 
$\chi$ is an apropriate cut off functions and $u$  a solution of \eqref{E}. 
\begin{prop}[Three balls inequality]\label{tst}
There exist positive constants $R_1$, $C_1$, $C_2$ and $0<\alpha<1$  wich depend only on $(M,g)$ such that, if $u$ is a solution to \eqref{E} with $W$ of class $L^\infty$,  
then for any $R<R_1$, and any $x_0\in M, $ one has 
\begin{equation}\label{ttc}
\| u\|_{B_R(x_0)}\leq e^ {C_1 \|W\|_{\infty}^{2/3}+C_2}\|u\|_{B_{\frac{R}{2}}(x_0)}^\alpha\| u\|_{B_{2R}(x_0)}^{1-\alpha}.
\end{equation}
\end{prop}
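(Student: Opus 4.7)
The plan is to apply the Carleman estimate of Corollary \ref{tics} to the localisation $\chi u$, where $\chi$ is a smooth radial cutoff with $\chi\equiv 0$ on $B_{R/4}\cup(M\setminus B_{2R})$, $\chi\equiv 1$ on $A_{R/2,\,3R/2}$, $|\nabla\chi|\leq C/R$, and $|\Delta\chi|\leq C/R^2$. Since $u$ solves \eqref{E}, one has
\[
(\Delta+W)(\chi u) = 2\nabla\chi\cdot\nabla u + u\,\Delta\chi,
\]
a source supported in the two transition annuli $A_{R/4,R/2}$ and $A_{3R/2,2R}$.

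First I would feed $v=\chi u$ into \eqref{Siv2} with $\tau\geq C_1\|W\|_\infty^{2/3}+C_2$, keep only the $\tau^3$-term on the right-hand side, and restrict the corresponding integral to $A_{R/2,R}\subset\{\chi=1\}$. Since $\phi(r)=-\ln r+r^\varepsilon$ is strictly decreasing for small $r$, the weight $e^{\tau\phi}$ is bounded below there by $e^{\tau\phi(R)}$ and on each transition annulus bounded above by $e^{\tau\phi(R/4)}$ and $e^{\tau\phi(3R/2)}$ respectively. To convert the gradient contribution into an $L^2$ quantity, I would invoke the standard Caccioppoli estimate for solutions of \eqref{E}, namely $\int_{B_{R/2}}|\nabla u|^2 \leq C(R^{-2}+\|W\|_\infty)\int_{B_R} u^2$, and its analogue on the outer annulus.

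Plugging these in, factoring out $e^{2\tau\phi(R)}$, and using that $\phi(R/4)-\phi(R)=\ln 4+O(R^\varepsilon)$ is bounded while $\phi(3R/2)-\phi(R)\leq -c_0<0$ for $R$ small, I expect an inequality of the form
\[
\tau^3\|u\|_{A_{R/2,R}}^2 \leq C\bigl(1+R^2\|W\|_\infty\bigr)\bigl[e^{C_4\tau}\|u\|_{B_{R/2}}^2 + e^{-c_0\tau}\|u\|_{B_{2R}}^2\bigr].
\]
Adding $\|u\|_{B_{R/2}}^2$ on both sides replaces the left-hand annulus by $B_R$, and $\tau^3$ gets absorbed into $e^{C_4\tau}$ for $\tau$ large. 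Finally I would optimise in $\tau$: the unconstrained optimum $\tau^\ast=(C_4+c_0)^{-1}\ln\bigl(\|u\|_{B_{2R}}/\|u\|_{B_{R/2}}\bigr)$ produces the desired three balls inequality with $\alpha=c_0/(C_4+c_0)$; when $\tau^\ast<C_1\|W\|_\infty^{2/3}+C_2$, I would substitute the admissibility threshold instead, drop the (then negligible) second term, and use the trivial $\|u\|_{B_{R/2}}\leq\|u\|_{B_{2R}}$ to rewrite the estimate in three-balls form. Both branches yield the prescribed prefactor.

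The main obstacle will be the bookkeeping of the weight ratios between the three annuli, together with verifying that both the Caccioppoli-induced factor $\sqrt{1+R^2\|W\|_\infty}$ and the polynomial $\tau^3$ are absorbed into the prescribed exponential $e^{C_1\|W\|_\infty^{2/3}+C_2}$; the former reduces to the elementary observation that $\sqrt{1+x}\leq C\,e^{x^{2/3}}$ for all $x\geq 0$, while the latter uses $\tau^3\leq e^\tau$ once $\tau$ is large.
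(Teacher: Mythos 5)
Your proposal is correct and follows essentially the same route as the paper: localise with a radial cutoff supported in an annulus, apply the Carleman estimate of Corollary \ref{tics} to $\chi u$, control the gradient terms on the transition annuli by the Caccioppoli/elliptic estimate $\|\nabla u\|_{(1-a)R}\leq C\bigl((1-a)^{-1}R^{-1}+\|W\|_\infty^{1/2}\bigr)\|u\|_R$, exploit the monotonicity of $\phi$ to compare the weights, and then choose $\tau$ as the maximum of the unconstrained optimum and the admissibility threshold $C_1\|W\|_\infty^{2/3}+C_2$. The only cosmetic differences are the exact radii of the cutoff and the fact that the paper adds the two candidate values of $\tau$ rather than splitting into cases, which leads to the same conclusion.
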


\begin{proof}
Let $x_0$ a point in $M$. Let $u$ be a solution 
to \eqref{E} and $R$ such that $0<R<\frac{R_0}{2}$ with $R_0$ as in corollary \ref{tics}.We will denote by $\|v\|_{R_1,R_2}$ the $L^2$ norm of $v$ on the set  
 $A_{R_1,R_2}:=\{x\in M ;\:R_1\leq r(x)\leq R_2\}$.  
Let $\psi\in\Cc^{\infty}_0(B_{2R})$, $0\leq\psi\leq1$, a function with the following properties:
\begin{itemize}
\item $\psi(x)=0$ if $r(x)<\frac{R}{4}$ or  $r(x)>\frac{5R}{3}$,
\item[$\bullet$] $\psi(x)=1$ if $\frac{R}{3}<r(x)<\frac{3R}{2}$,
\item[$\bullet$] $|\nabla\psi(x)|\leq \frac{C}{R}$,
 \item[$\bullet$] $|\nabla^{2}\psi(x) |\leq \frac{C}{R^2}$.
\end{itemize}
First since the function $\psi u$ is supported in the annulus $A_{\frac{R}{3},\frac{5R}{3}}$, we can apply estimate \eqref{Siv2} of theorem \ref{tics}. 
In particular we have, since the quotient between $\frac{R}{3}$ and $\frac{5R}{3}$ don't depend on $R$.  :
\begin{equation}\label{debut}C\left\|r^2e^{\tau\phi}\left(\Delta \psi u+2\nabla u\cdot\nabla\psi\right)\right\|\geq\tau\left\|e^{\tau\phi}\psi u\right\|.\end{equation}

\non Assume that $\tau \geq 1$, and use properties of $\psi$ to get  : 
\begin{equation}\label{3c1}\begin{array}{rcl}
\|e^{\tau\phi}u\|_{\frac{R}{3},\frac{3R}{2}} &\leq  &C\left(\|e^{\tau\phi}u\|_{\frac{R}{4},\frac{R}{3}}+\|e^{\tau\phi}u\|_{\frac{3R}{2},\frac{5R}{3}}\right) \\
&+&C\left(R\|e^{\tau\phi}\nabla u\|_{\frac{R}{4},\frac{R}{3}}+R\|e^{\tau\phi}\nabla u\|_{\frac{3R}{2},\frac{5R}{3}}\right).\end{array}\end{equation}

\non Recall that  $\phi(x)=-\ln r(x)+r(x)^\varepsilon$. In particular $\phi$ is radial and decreasing (for small $r$). Then one has, 
$$\begin{array}{rcl}
\|e^{\tau\phi}u\|_{\frac{R}{3},\frac{3R}{2}}&\leq&
 C\left(e^{\tau\phi(\frac{R}{4})}\|u\|_{\frac{R}{4},\frac{R}{3}}+e^{\tau\phi(\frac{3R}{2})}\|u\|_{\frac{3R}{2},\frac{5R}{3}}\right)\\&+&C\left(Re^{\tau\phi(\frac{R}{4})}\|\nabla u\|_{\frac{R}{4},\frac{R}{3}}+Re^{\tau\phi(\frac{3R}{2})}\|\nabla u\|_{\frac{3R}{2},\frac{5R}{3}}\right).
\end{array}$$
Now we recall the following elliptic estimates : since $u$ satisfies \eqref{E} then it is not hard to see that :
 \begin{equation}\label{lm1}\|\nabla u\|_{(1-a)R}\leq C\left(\frac{1}{(1-a)R}+\|W\|^{1/2}_{\infty}\right)\|u\|_{R}, \:\:\ \mathrm{for}\  \:0<a<1.   \end{equation}
Moreover since $A_{R_1,R_2}\subset B_{R_2}$, using formula \eqref{lm1} and properties of $\phi$ gives 

$$e^{\tau\phi(\frac{3R}{2})}\| u\|_{\frac{3R}{2},\frac{5R}{3}}\leq C\left(\frac{1}{R}+\|W\|^{1/2}_{\infty}\right)e^{\tau\phi(\frac{3R}{2})}\|u\|_{2R}.$$
Using  (\ref{3c1}) one has :
 \begin{equation*}\label{ind}
\|u\|_{\frac{R}{3},R} \leq C(\|W\|^{1/2}_{\infty}+1)\left( e^{\tau(\phi(\frac{R}{4})-\phi(R))}\|u\|_{\frac{R}{2}}+e^{\tau(\phi(\frac{3R}{2})-\phi(R))}\|u\|_{2R}\right).
  \end{equation*}
  Let $A_R=\phi(\frac{R}{4})-\phi(R)$ and  $B_R=-(\phi(\frac{3R}{2})-\phi(R))$.
  From the properties of  $\phi$, we have $0<A^{-1}\leq A_R\leq A$ and $0<B\leq B_R\leq B^{-1}$ where $A$ and $B$ don't depend on $R$. 
We may assume that $C(\|W\|^{1/2}_{\infty}+1)\geq 2$. Then we can add $\|u\|_{\frac{R}{3}}$ to each side and bound it in the right hand side by  
$C(\|W\|^{1/2}_{\infty}+1)e^{\tau A}\|u\|_{\frac{R}{2}}$. We get :
  
\begin{equation}\label{3c2}
\|u\|_{R} \leq  C(\|W\|^{1/2}_{\infty}+1)\left(e^{\tau A}\|u\|_{\frac{R}{2}}+e^{-\tau B}\|u\|_{2R}\right).
  \end{equation} 
Now we want to find $\tau$ such that $$C(\|W\|^{1/2}_{\infty}+1)e^{-\tau B}\|u\|_{2R}\leq \frac{1}{2}\|u\|_{R}$$
wich is true for $\tau \geq -\frac{1}{B}\ln\left(\frac{1}{2C(\|W\|^{1/2}_{\infty}+1)}\frac{\|u\|_R}{\|u\|_{2R}}\right).$ 
Since $\tau$ must also satisfy  $$\tau \geq C_1\|W\|_{\infty}^{2/3}+C_2,$$
we choose
\begin{equation} \label{tau2}
\tau = -\frac{1}{B}\ln\left(\frac{1}{2C(\|W\|^{1/2}_{\infty}+1)}\frac{\|u\|_R}{\|u\|_{2R}}\right)+C_1\|W\|_{\infty}^{2/3}+C_2.
\end{equation}

\noindent Up to a change of constant we may assume that $C(\|W\|^{1/2}_{\infty}+1)\leq C_1\|W\|_{\infty}^{2/3}+C_2$ then we can deduce from \eqref{3c2} that :

\begin{equation}\label{last}
\|u\|_R^{\frac{B+A}{B}}\leq e^{C_1\|W\|_{\infty}^{2/3}+C_2}\|u\|_{2R}^{\frac{A}{B}}\|u\|_{\frac{R}{2}},
\end{equation}
 
\noindent  Finally define  $\alpha=\frac{A}{A+B}$ and taking exponent $\frac{B}{A+B}$ of \eqref{last} gives the result. 

\end{proof}

\subsection{Doubling estimates}
Now we intend to show that the vanishing order of solutions to $\eqref{E}$ is everywhere bound by $C_1\|W\|^{\frac{2}{3}}_\infty+C_2$. This is an immediate consequence of the following :
\begin{thm}[doubling estimate]\label{dor}

There exist two positive constants  $C_1$ and $C_2$,  depending only on  $M$ such that : if $u$ is a solution to   \eqref{E} on $M$ with $W$ of class $\Cc^1$
then  for any $x_0$ in $M$ and any  $r>0$, one has
\begin{equation}\label{do}\|u\|_{B_{2r}(x_0)}\leq e^{C_1\|W\|_{\infty}^{2/3}+C_2}\|u\|_{B_r(x_0)}.
\end{equation}
\end{thm}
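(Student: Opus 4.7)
The plan is to combine the three-balls inequality (Proposition \ref{tst}) with a compactness-and-chaining argument to propagate information globally, and then to descend to arbitrarily small scales by a dyadic iteration. Write $D:=e^{C_1\|W\|_\infty^{2/3}+C_2}$ throughout.

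\emph{Step 1 (Global-to-local lower bound at a fixed scale).} I would first establish that there is a small radius $r_1>0$ depending only on $(M,g)$ such that, for every $x_0\in M$,
\[
\|u\|\ \leq\ e^{C_1'\|W\|_\infty^{2/3}+C_2'}\,\|u\|_{B_{r_1}(x_0)}.
\]
Cover $M$ by finitely many balls $\{B_{r_1/2}(y_i)\}_{i=1}^{N}$. Pigeon-hole produces $y_\star$ with $\|u\|_{B_{r_1/2}(y_\star)}\geq \|u\|/\sqrt{CN}$, where $C$ absorbs the overlap of the cover. For any target $x_0$, connect $y_\star$ to $x_0$ by a minimizing geodesic and choose points $y_\star=z_0,z_1,\ldots,z_m=x_0$ on it with $d(z_k,z_{k+1})\leq r_1/2$; the number of links $m$ is bounded by $2\,\mathrm{diam}(M)/r_1$, hence by a constant depending only on $M$. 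Applying \eqref{ttc} at $z_k$ with $R=r_1$, using $B_{r_1/2}(z_k)\subset B_{r_1}(z_{k+1})$ and $\|u\|_{B_{2r_1}(z_k)}\leq \|u\|$, and rearranging gives the recursion
\[
\|u\|_{B_{r_1}(z_{k+1})}\ \geq\ D^{-1/\alpha}\,\|u\|_{B_{r_1}(z_k)}^{1/\alpha}\,\|u\|^{-(1-\alpha)/\alpha}.
\]
Setting $\beta_k:=\|u\|_{B_{r_1}(z_k)}/\|u\|$ reduces this to $\beta_{k+1}\geq D^{-1/\alpha}\beta_k^{1/\alpha}$; iterating $m$ times and using $\beta_0\gtrsim 1/\sqrt{N}$ gives the claim with new constants $C_1', C_2'$ depending only on $M$.

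\emph{Step 2 (Large radii).} For $r\geq r_1$ the doubling inequality follows immediately from Step 1: $\|u\|_{B_{2r}(x_0)}\leq\|u\|\leq e^{C_1'\|W\|_\infty^{2/3}+C_2'}\|u\|_{B_{r_1}(x_0)}\leq e^{C_1'\|W\|_\infty^{2/3}+C_2'}\|u\|_{B_r(x_0)}$, using $B_{r_1}(x_0)\subset B_r(x_0)$. The case $r\geq\mathrm{diam}(M)$ is trivial since both balls coincide with $M$.

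\emph{Step 3 (Small radii).} For $r<r_1$, let $a_k:=\|u\|_{B_{2^k r}(x_0)}$ and $c_k:=\log(a_{k+1}/a_k)\geq 0$. Applying \eqref{ttc} at $x_0$ with radius $2^{k+1}r$ (valid as long as $2^{k+2}r<R_1$) and taking logarithms yields the linear recursion
\[
c_k\ \leq\ \frac{1-\alpha}{\alpha}\,c_{k+1}+\frac{1}{\alpha}\bigl(C_1\|W\|_\infty^{2/3}+C_2\bigr).
\]
Choose $K$ to be the largest integer with $2^Kr\leq r_1$; Step 2 applied at the scale $2^Kr$ gives $c_K\leq C_1'\|W\|_\infty^{2/3}+C_2'$. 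A short computation with the Carleman weight $\phi(r)=-\ln r+r^\varepsilon$ of Corollary \ref{tics} shows that $\alpha=A/(A+B)$ with $A\sim\ln 4$ and $B\sim\ln(3/2)$, so $\alpha>1/2$ and $\lambda:=(1-\alpha)/\alpha<1$. Back-iterating the recursion from $k=K$ down to $k=0$ yields
\[
c_0\ \leq\ \lambda^K c_K+\frac{C_1\|W\|_\infty^{2/3}+C_2}{\alpha}\sum_{j=0}^{K-1}\lambda^j\ \leq\ C_1''\|W\|_\infty^{2/3}+C_2'',
\]
the geometric sum being uniformly bounded in $K$; this is precisely \eqref{do}.

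\emph{Main obstacle.} The principal difficulty is to preserve the exponent $2/3$ on $\|W\|_\infty$ through both iterations. In Step 1, each link of the chain contributes one additive copy of $C_1\|W\|^{2/3}+C_2$ to the exponent, and the crucial observation is that the number of links is bounded purely in terms of $M$; otherwise, a link count depending on $\|W\|$ would inflate the exponent. In Step 3, everything hinges on the geometric series $\sum\lambda^j$ converging uniformly in $K$, which in turn rests on the explicit computation $\alpha=\ln 4/\ln 6>1/2$ for the weight $\phi(r)=-\ln r+r^\varepsilon$; this is the real reason one must track the precise weight in Corollary \ref{tics} rather than appeal to an abstract three-balls inequality.
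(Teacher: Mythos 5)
Your argument is correct, and it reaches the doubling estimate by a genuinely different route for the crucial small-radius case. Your Step 1 is exactly the paper's Proposition \ref{cor1} (pigeonholing over a finite cover, then a chain of overlapping balls along a geodesic with a link count depending only on $M$), and Step 2 matches the paper's treatment of $r\geq R_0/16$. Where you diverge is Step 3: you iterate the three-balls inequality \eqref{ttc} dyadically and rely on the damping factor $\lambda=(1-\alpha)/\alpha<1$, which forces you to open up the proof of Proposition \ref{tst} and certify that $\alpha=A/(A+B)$ with $A$ close to $\ln 4$ and $B$ close to $\ln(3/2)$, hence $\alpha>1/2$; the bare statement of the proposition, which only guarantees some $\alpha\in(0,1)$, would not suffice, and you correctly identify this as the load-bearing point. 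The paper instead makes a second pass through the Carleman inequality of Corollary \ref{tics}, applied to $\psi u$ with $\psi$ supported on an annulus $[\delta,R]$ whose inner radius $\delta$ is arbitrarily small while the outer radius $R$ stays macroscopic; the logarithmic weight $\phi=-\ln r+r^{\varepsilon}$ makes the relevant ratio $e^{\tau(\phi(\delta)-\phi(3\delta))}\approx 3^{\tau}$ independent of $\delta$, and the auxiliary Corollary \ref{cor2} (a lower bound for $\|u\|_{A_{R/8,R/4}}$) fixes an admissible $\tau$ of size $C_1\|W\|_\infty^{2/3}+C_2$. The two mechanisms are the same scale invariance of $-\ln r$ seen from two angles: in your version it surfaces as $\alpha>1/2$ holding uniformly over scales, so that the geometric series $\sum\lambda^j$ stays bounded as $K\to\infty$. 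Your approach is more modular (only the three-balls inequality plus one numerical constant), at the price of needing the quantitative value of $\alpha$; the paper avoids any iteration over scales but needs the extra Carleman run and Corollary \ref{cor2}. One small point to make explicit: before taking logarithms you should note that $a_0=\|u\|_{B_r(x_0)}>0$, which follows because $a_0=0$ would propagate upward through the three-balls inequality to give $a_K=0$, contradicting the lower bound of Step 1.
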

\vspace{0,5cm}
\begin{Rq}
Using standard elliptic theory to bound the $L^\infty$  norm of $|u|$ by a multiple of its $L^2$ norm on a greater ball
(see by example \cite{GT} Theorem 8.17 and problem 8.3), 
 show that the doubling estimate is still true with the $L^\infty$ norm :
\begin{equation}
\|u\|_{L^\infty(B_{2r}(x_0))}\leq e^{C_1\|W\|_{\infty}^{2/3}+C_2}\|u\|_{L^\infty(B_r(x_0))}.
\end{equation}

\end{Rq}

\noindent  To prove the theorem \ref{dor} we need to use the standard overlapping chains of balls argument (\cite{DF1,JL,K}) to show :  
 \begin{prop}\label{cor1}
For any $R>0$ their exists $C_R>0$ such that for any  $x_0\in M$, any $W\in \Cc^1(M)$ and any solutions $u$ to \eqref{E} :
$$\| u\|_{B_R(x_0)}\geq e^{-C_R(1+\|W\|_{\infty}^{2/3})}\| u\|_{L^2(M)} .$$
\end{prop}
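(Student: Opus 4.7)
The plan is to use the standard overlapping chain-of-balls argument going back to Donnelly--Fefferman (see \cite{DF1,JL,K}), with Proposition \ref{tst} as the propagation step. The only delicate point is to carry the $\|W\|_\infty^{2/3}$ rate through the iteration without degrading it.

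First I would fix a radius $r_0=\min(R,R_1)/4$ so that Proposition \ref{tst} applies at every point with this radius, and cover $M$ by a finite collection of balls $B_{r_0/2}(x_i)$, $i=1,\ldots,N$, where $N$ depends only on $(M,g)$ and $R$. Pigeonhole yields some index $i_0$ with
\[
\|u\|_{B_{r_0/2}(x_{i_0})} \geq N^{-1/2}\|u\|_{L^2(M)},
\]
so one single ball already captures a definite proportion of the global $L^2$-mass of $u$.

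Next, for an arbitrary $x_0\in M$, I would connect $x_0$ to $x_{i_0}$ by a chain $y_0=x_0, y_1,\ldots,y_m=x_{i_0}$ of points of $M$ satisfying $d(y_{k-1},y_k)<r_0/2$, with $m$ bounded in terms of $(M,g)$ and $R$; then $B_{r_0/2}(y_k)\subset B_{r_0}(y_{k-1})$. Applying Proposition \ref{tst} at $y_k$ with radius $r_0$ and bounding $\|u\|_{B_{2r_0}(y_k)}\leq \|u\|_{L^2(M)}$ gives
\[
\|u\|_{B_{r_0}(y_k)} \leq K\,\|u\|_{B_{r_0/2}(y_k)}^{\alpha}\,\|u\|_{L^2(M)}^{1-\alpha},\qquad K:=e^{C_1\|W\|_\infty^{2/3}+C_2}.
\]
Writing $a_k=\|u\|_{B_{r_0}(y_k)}/\|u\|_{L^2(M)}$ and using the nesting $B_{r_0/2}(y_k)\subset B_{r_0}(y_{k-1})$ to replace $\|u\|_{B_{r_0/2}(y_k)}$ by $\|u\|_{B_{r_0}(y_{k-1})}$, this is the recursion $a_k\leq K\,a_{k-1}^{\alpha}$, equivalently $a_{k-1}\geq K^{-1/\alpha}\,a_k^{1/\alpha}$.

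Iterating the inverted recursion from $k=m$ down to $k=1$ yields
\[
a_0 \geq K^{-(\alpha^{-1}+\alpha^{-2}+\cdots+\alpha^{-m})}\,a_m^{\alpha^{-m}} \geq e^{-C_R(1+\|W\|_\infty^{2/3})},
\]
using $a_m\geq N^{-1/2}$ from the pigeonhole step and the fact that $m,\alpha,N$ depend only on $(M,g)$ and $R$. Since $B_{r_0}(x_0)\subset B_R(x_0)$, this is precisely the claimed lower bound. The main bookkeeping obstacle is the fact that the finite geometric sum $\alpha^{-1}+\cdots+\alpha^{-m}$ is an enormous factor, so the constant $C_R$ is huge; the point, however, is that this amplification depends only on $(M,g)$ and $R$ and multiplies the exponent $C_1\|W\|_\infty^{2/3}+C_2$ linearly, so the $\|W\|_\infty^{2/3}$ rate inherited from Proposition \ref{tst} is preserved exactly as stated.
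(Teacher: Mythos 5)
Your argument is correct and is essentially the paper's own proof: the standard overlapping chains of balls argument, using Proposition \ref{tst} as the propagation step and an anchor ball carrying a definite fraction of the $L^2$-mass (the paper locates it by maximizing $\|u\|_{B_R(x)}$ over $x$, you by pigeonholing over a finite cover, which is the same thing). The explicit bookkeeping of the iterated exponents $\alpha^{-1}+\cdots+\alpha^{-m}$ and the observation that this only multiplies the exponent $C_1\|W\|_\infty^{2/3}+C_2$ by a constant depending on $(M,g)$ and $R$ is exactly the point of the paper's iteration.
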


\begin{proof}
We may assume without loss of generality that $R<R_0$, with $R_0$ as in the three balls inequality (proposition \ref{ttc}).
 Up to multiplication by a constant, we can assume that $\| u\|_{L^2(M)}=1$.
 We denote by $\bar{x}$ a point in $M$ such that  $\| u\|_{B_R(\bar{x})}=\sup_{x\in M}  \|u\|_{B_R(x)}$.
 This implies that one has  $\| u\|_{B_{R(\bar{x})}}\geq D_R$, where $D_R$ depend only on $M$ and $R$. One has from proposition (\ref{ttc})  at an arbitrary point $x$ of $M$ : 

\begin{equation}\label{cop}\| u\|_{B_{R/2}(x)}\geq e^{-c(1+\|W\|_{\infty}^{2/3})}\| u\|^{\frac{1}{\alpha}}_{B_R(x)}.\end{equation} 

Let $\gamma$ be a geodesic curve  beetween $x_0$ and $\bar{x}$ and define  $x_1,\cdots,x_m=\bar{x}$ such that 
 $x_i\in\gamma$ and
 $B_{\frac{R}{2}}(x_{i+1})\subset B_R(x_i),$ for any  $i$ from $0$ to $m-1$. The number $m$  depends only on $\mathrm{diam}(M)$ and  $R$.
 Then the properties of $(x_i)_{1\leq i\leq m}$ and inequality \eqref{cop} give for all $i$, $1\leq i\leq m$ :
\begin{equation}
\|u\|_{B_{R/2}(x_i)}\geq e^{-c(1+\|W\|_{\infty}^{2/3})}\|u\|^{\frac{1}{\alpha}}_{B_{R/2}(x_{i+1})}.
\end{equation}

The result follows by iteration and the fact that $\| u\|_{B_R(\bar{x})}\geq D_R$.

\end{proof}
\begin{cor}\label{cor2}
For all $R>0$, there exists a positive constant $C_R$ depending only on $M$ and $R$ such that at any point $x_0$ in $M$ one has
\begin{equation*}
\| u\|_{R,2R}\geq e^{-C_R(1+\|W\|_{\infty}^{2/3})}\| u\|_{L^2(M)}.
\end{equation*}
\end{cor}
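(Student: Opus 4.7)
The plan is to combine the three balls inequality (Proposition \ref{tst}) with the lower bound of Proposition \ref{cor1}, applied at an auxiliary point chosen so that a small ball around it sits inside the annulus $A_{R,2R}(x_0)$. I restrict attention to $R$ small enough that $2R<R_1$ and $3R/2<\mathrm{diam}(M)$, so that the geometric setup below makes sense.

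First, using the connectedness of $M$, I pick a point $y_0\in M$ with $d(y_0,x_0)=3R/2$. The triangle inequality then yields $B_{R/2}(y_0)\subset A_{R,2R}(x_0)$, so
\[
\|u\|_{B_{R/2}(y_0)}\le \|u\|_{R,2R},
\]
while trivially $\|u\|_{B_{2R}(y_0)}\le \|u\|_{L^2(M)}$.

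Next, I apply Proposition \ref{tst} centered at $y_0$ with radius $R$ and plug in the bounds above to get
\[
\|u\|_{B_R(y_0)}\le e^{C_1\|W\|_\infty^{2/3}+C_2}\,\|u\|_{R,2R}^{\alpha}\,\|u\|_{L^2(M)}^{1-\alpha}.
\]
Simultaneously, Proposition \ref{cor1} applied at $y_0$ rather than at $x_0$ provides the lower bound
\[
\|u\|_{B_R(y_0)}\ge e^{-C_R(1+\|W\|_\infty^{2/3})}\,\|u\|_{L^2(M)}.
\]
Chaining these two estimates and dividing both sides by $\|u\|_{L^2(M)}^{1-\alpha}$ gives $\|u\|_{L^2(M)}^{\alpha}\le e^{C'_R(1+\|W\|_\infty^{2/3})}\|u\|_{R,2R}^{\alpha}$, and raising to the $1/\alpha$-th power (with $\alpha\in(0,1)$ the fixed exponent from Proposition \ref{tst}) completes the argument after absorbing $1/\alpha$ into the constant.

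The main obstacle is the geometric feasibility of the first step: one needs $R$ small compared to both $R_1$ and $\mathrm{diam}(M)$ for a point $y_0$ with $d(y_0,x_0)=3R/2$ to exist and for the three balls inequality to apply on $B_{2R}(y_0)$. All the analytic content has already been packaged into Propositions \ref{tst} and \ref{cor1}, and the exponent $\|W\|_\infty^{2/3}$ carries through automatically because both ingredients already exhibit it.
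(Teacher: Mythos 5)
Your argument is correct: you locate a ball inside the annulus $A_{R,2R}(x_0)$ and transfer the global lower bound of Proposition \ref{cor1} to it, which is exactly the idea of the paper's proof. The only difference is that you take a detour: you apply Proposition \ref{cor1} at radius $R$ around the auxiliary point $y_0$ and then need the three balls inequality (Proposition \ref{tst}) to pass from $B_R(y_0)$ down to $B_{R/2}(y_0)\subset A_{R,2R}(x_0)$, followed by the division-and-$1/\alpha$ manipulation. This step is redundant, because Proposition \ref{cor1} is already stated for every radius with a radius-dependent constant: the paper simply picks $x_1\in A_{R,2R}$ with $B_{R/4}(x_1)\subset A_{R,2R}$ and applies Proposition \ref{cor1} directly to $B_{R/4}(x_1)$, so that $\|u\|_{R,2R}\geq\|u\|_{B_{R/4}(x_1)}\geq e^{-C_R(1+\|W\|_\infty^{2/3})}\|u\|_{L^2(M)}$ in one line. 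You could shorten your proof the same way by invoking Proposition \ref{cor1} at radius $R/2$ centered at $y_0$. Your restriction to $R$ small relative to $R_1$ and $\mathrm{diam}(M)$ matches the implicit restriction in the paper, so no points lost there; both arguments yield the claimed exponent $\|W\|_\infty^{2/3}$ for the same reason.
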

\begin{proof} 
 Recall that $\|u\|_{R,2R}=\|u\|_{L^2(A_{R,2R})}$ with $A_{R,2R}:=\{x; R\leq d(x,x_0)\leq 2R)\}$.
Let  $R<R_0$ where $R_0$ is from proposition \ref{3c1}, note that $R_0\leq \mathrm{diam}(M)$. 
Since $M$ is geodesically complete, there exists a point $x_1$ in  $A_{R,2R}$  
 such that $B_{x_1}(\frac{R}{4})\subset A_{R,2R}$. From proposition \ref{cor1} one has 
 $$\|u\|_{B_{\frac{R}{4}}(x_1)}\geq e^{-C_R(1+\|W\|_{\infty}^{2/3})}\| u\|_{L^2(M)}$$
 wich gives the result. 
\end{proof}
\begin{proof}[Proof of theorem \ref{dor}]

We proceed as in the proof of three balls inequality (proposition \ref{3c1}) except for the fact that now we want the first ball to become arbitrary small in front of the others.
 Let  $R=\frac{R_0}{4}$ with $R_0$ as in the three balls inequality,  let  $\delta$ such that  $0<3\delta<\frac{R}{8}$,
and  define  a smooth function $\psi$, with $0\leq\psi\leq1$  as follows: \vspace{0,3cm}
 
\begin{itemize}
\item[$\bullet$] $\psi(x)=0$ if $r(x)<\delta$ or if $r(x)>R$,
\item[$\bullet$] $\psi(x)=1$ if  $r(x)\in[\frac{5\delta}{4},\frac{R}{2}]$,
\item[$\bullet$] $|\nabla\psi(x)|\leq\frac{C}{\delta}$ and  $|\nabla^2\psi(x)|\leq\frac{C}{\delta^2}$ if $r(x)\in[\delta,\frac{5\delta}{4}]$ ,
 \item[$\bullet$] $|\nabla\psi(x)|\leq C$ and $|\nabla^2\psi(x)|\leq C$ if $r(x)\in[\frac{R}{2},R]$.\vspace{0,3cm}
\end{itemize}
Keeping appropriates terms in \eqref{Siv2} applied to $\psi u$ gives :  
\begin{equation*}
\begin{array}{rcl}
\|r^{\frac{\varepsilon}{2}}e^{\tau\phi}\psi u\|+ \tau\delta^{\frac{1}{2}}\|r^{-\frac{1}{2}}e^{\tau\phi}\psi u\|
&\leq &C\left(\|r^2e^{\tau\phi}\nabla u\cdot\nabla\psi\|+\|r^2e^{\tau\phi}\Delta\psi u\|\right). \vspace{0,15cm}\\
\end{array}
\end{equation*}
Using properties of $\psi$, one has

\begin{eqnarray*}
\|r^{\frac{\varepsilon}{2}}e^{\tau\phi} u\|_{\frac{R}{8},\frac{R}{4}}+\|e^{\tau\phi}u\|_{\frac{5\delta}{4},3\delta} \!\!&\leq  &\!\!C \left(\delta \|e^{\tau\phi}\nabla u\|_{\delta,\frac{5\delta}{4}}+\|e^{\tau\phi}\nabla u\|_{\frac{R}{2},R}\right)\\
&\!\!+\!\!&C\left(\|e^{\tau\phi} u\|_{\delta,\frac{5\delta}{4}}+\|e^{\tau\phi}u\|_{\frac{R}{2},R}\right).\nonumber
\end{eqnarray*}
 \noindent Then  (\ref{lm1}) and properties of   $\phi$, we get

\begin{equation*}\begin{array}{ccl}
e^{\tau\phi(\frac{R}{4})} \|u\|_{\frac{R}{8},\frac{R}{4}}&+&e^{\tau\phi(3\delta)}\|u\|_{\frac{5\delta}{4},3\delta} \\ 
&\leq& C(1+\|W\|_{\infty}^{1/2})\left(e^{\tau\phi(\delta)}\|u\|_{\frac{3\delta}{2}}+e^{\tau\phi(\frac{R}{3})}\|u\|_{\frac{5R}{3}}\right),\end{array}
\end{equation*}

\noindent and adding $e^{\tau\phi(3\delta)}\|u\|_{\frac{5\delta}{4}}$ to each side leads to 
\begin{equation*}\begin{array}{ccl}
e^{\tau\phi(\frac{R}{4})} \|u\|_{\frac{R}{8},\frac{R}{4}}&+&e^{\tau\phi(3\delta)}\|u\|_{3\delta}\\ &\leq &
C(1+\|W\|_{\infty}^{1/2})\left(e^{\tau\phi(\delta)}\|u\|_{\frac{3\delta}{2}}+e^{\tau\phi(\frac{R}{3})}\|u\|_{\frac{5R}{3}}\right).
\end{array}\end{equation*}
Now we want to choose $\tau$ such that  
$$C(1+\|W\|_{\infty}^{1/2}) e^{\tau\phi(\frac{R}{3})}\|u\|_{\frac{5R}{3}}\leq \frac{1}{2}e^{\tau\phi(\frac{R}{4})} \|u\|_{\frac{R}{8},\frac{R}{4}}.$$
For the same reasons than before we choose 
  $$\tau=\frac{1}{\phi(\frac{R}{3})-\phi(\frac{R}{4})}\mathrm{ln}\left(\frac{1}{2C(1+\|W\|_{\infty}^{1/2})}\frac{\|u\|_{\frac{R}{8},\frac{R}{4}}}{\|u\|_{\frac{5R}{3}}}\right)+C_1(1+\|W\|_{\infty}^{2/3}).$$
 Define $D_R=\left(\phi(\frac{R}{3})-\phi(\frac{R}{4})\right)^{-1}$; like before one has $0<A^{-1}\leq D_R\leq A$.
 Droping the first term in the left hand side, one has
  $$\|u\|_{3\delta}\leq e^{C(1+\|W\|_{\infty}^{2/3})}\left(\frac{\|u\|_{\frac{R}{8},\frac{R}{4}}}{\|u\|_{\frac{5R}{3}}}\right)^{A}\|u\|_{\frac{3\delta}{2}} $$
  Finally from corollary \ref{cor2}, define  $r=\frac{3\delta}{2}$ to have : 
  $$\|u\|_{2r}\leq e^{C(1+\|W\|_{\infty}^{2/3})}\|u\|_{r}. $$
   Thus, the theorem is proved for all $r\leq\frac{R_0}{16}$.
 Using proposition \ref{cor1} we have for $r\geq \frac{R_0}{16}$ :
\begin{equation*}\begin{array}{rcl}
\|u\|_{B_{x_0}(r)}\geq\| u\|_{B_{x_0}(\frac{R_0}{16})}&\geq& e^{-C_0(1+\|W\|_{\infty}^{2/3})}\| u\|_{L^2(M)}\\
&\geq& e^{-C_1(1+\|W\|_{\infty}^{2/3})} \|u\|_{B_{x_0}(2r)}.
  \end{array}\end{equation*}
\end{proof}
Finally theorem \ref{t1} is an easy and  direct consequence of this doubling estimates.
\section{Possible vanishing order for solutions}

The aim of this is section is to proove theorem \ref{EX} which states that our exponent $\frac{2}{3}$ on $\|W\|_\infty$ in theorem \ref{t1} is sharp. More precisely, 
 we will construct a sequence $(u_k,W_k)$ verifying $\Delta u_k+W_ku_k=0$ and such that :
\begin{itemize}
  \item[$\bullet$] In at least one point of $\Sb^2$, the function $u_k$ vanishes at an order at least 
 $C_1\|W_k\|^{\frac{2}{3}}_\infty+C_2$, where $C_1$ and $C_2$ are fixed numbers wich don't depend on $k$.                                              
\item[$\bullet$] $\lim_{k\rightarrow+\infty} \|W_k\|_{\infty}=+\infty$.
 \item[$\bullet$] The potential $W$ has compact support and 
$$\mathrm{supp}(W)\subset\left(\Sb^2\setminus(\{P\}\cup \{Q\}\right)$$                                                                                          \end{itemize}

Our construction will be inspired by the previous work of Meshkov. In \cite{M}, he have shown that one can find non trivial, 
complex valued, solutions of $\eqref{E}$ in $\R^n$ ($n\geq2$), with the following exponential decay at infinity : $\displaystyle{|u(x)|\leq e^{-C|x|^{\frac{4}{3}}}}.$  
This gives an negative answer to a question of Landis, \cite{L}.
He has also shown that $\frac{4}{3}$ is the greatest exponent for wich one can hope find non trivial solutions to \eqref{E} in $\R^n$.
\newline
Our main point is to construct on $\R^2$ a solution $u$  to $\Delta u +Wu=0$  with apropriate decay at infinity with respect to the $L^\infty$ norm of $W$.  
Then, we use stereographic projection to obtain theorem \ref{EX}. This suggests in particular that we not only need the function $W$ to be bounded, 
since we have to pull it back to the sphere, 
but we need the  stronger condition $$|W(x,y)|\leq \frac{C}{1+|x|^2+|y|^2}.$$ 
It actually appears that we can obtain  $W$ with compact support. 
We recall that our construction relies crucially on the fact that $W$ and $u$ are complex valued fonctions.\newline

From now on, we will denote by $(r,\varphi)$ polar coordinates of a point in $\R^2$ and by $[a,b]$ 
the annulus $[a,b]:=\{(r,\varphi)\in\R^+\times[0,2\pi[, a\leq r\leq b\}$.
We have the following on $\R^2$:
\begin{prop}
Let  $\rho>0$ a real number and $N$ an integer. For $\rho$ and $N$ great enough their exits  $u$ and $W$  verifying $\Delta u +W u=0$ on $\R^2$
such that :
\begin{itemize}
\item[$\bullet$] $C_2|x|^{-N}\leq |u(x)|\leq C_1|x|^{-N}$
\item[$\bullet$] $\|W\|_{L^\infty}\leq CN^{\frac{3}{2}}$
\item[$\bullet$] $W$ has support in  $]\frac{\rho}{2},\rho+6]$
\end{itemize}                  
\end{prop}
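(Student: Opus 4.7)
The plan is to define $u$ piecewise on $\R^2$ so that it is harmonic outside the prescribed annulus and exhibits the asymptotic behaviour $|u(x)|\asymp|x|^{-N}$ for $|x|$ large, with $W:=-\Delta u/u$ supported in (and bounded on) the transition annulus $]\rho/2,\rho+6]$. Both the required decay at infinity and the vanishing at the antipodal point $Q$ after stereographic projection come from the harmonic mode $r^{-N}e^{-iN\varphi}$; the complementary vanishing (at $P$) comes from using $r^{N}e^{iN\varphi}$ near the origin.

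Concretely, for $r\leq\rho/2$ set $u(r,\varphi)=r^{N}e^{iN\varphi}$, a harmonic complex-valued function, so that $W=0$ there; this vanishes to order $N$ at the origin. For $r\geq\rho+6$ set $u(r,\varphi)=c\,r^{-N}e^{-iN\varphi}$ for a constant $c>0$ chosen to match $|u|$ at the outer interface, again harmonic, giving the required decay at infinity. All the work is to build a smooth $u$ on $[\rho/2,\rho+6]$ that is $C^{1}$-matched to these harmonic pieces, never vanishes, and for which $|\Delta u/u|\leq CN^{3/2}$.

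The transition is realised by a Meshkov-style \emph{angular-mode ladder}: partition $[\rho/2,\rho+6]$ into $N$ sub-annuli $[\rho_{k},\rho_{k+1}]$ and, on the $k$-th piece, look for
\begin{equation*}
u(r,\varphi)=a_{k}(r)\,e^{i(N-2k)\varphi}+b_{k}(r)\,e^{i(N-2k-2)\varphi},
\end{equation*}
imposing $b_{k}(\rho_{k})=0$, $a_{k}(\rho_{k+1})=0$ and $b_{k}(\rho_{k+1})=a_{k+1}(\rho_{k+1})$, together with $C^{1}$-regularity across each interface. Thus only a single angular mode is active at every internal boundary, so the full $u$ is $C^{1}$ on $\R^{2}$. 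Choose the radial profiles $a_{k},b_{k}$ close to the corresponding harmonic models $r^{\pm(N-2k)}$, and arranged so that $|a_{k}|^{2}+|b_{k}|^{2}$ stays bounded below on each $[\rho_{k},\rho_{k+1}]$; this forces $|u|\geq\bigl||a_{k}|-|b_{k}|\bigr|$ to be bounded away from zero, and makes $W:=-\Delta u/u$ pointwise well-defined.

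The main obstacle is the quantitative bound $\|W\|_{\infty}\leq CN^{3/2}$, which is the whole point of the construction. In a sub-annulus of width $\delta_{k}$ at radius $\asymp\rho$, the quantity $|\Delta u|/|u|$ splits into a radial part of size $O(\delta_{k}^{-2})$, coming from the second radial derivatives of the profiles, and a centrifugal part of size $O(N^{2}/\rho^{2})$ from the angular Laplacian. Balancing both against $N^{3/2}$ forces $\delta_{k}\gtrsim N^{-3/4}$ and $\rho\gtrsim N^{1/4}$; this is exactly why the statement requires both $N$ and $\rho$ large enough, so that $N$ sub-annuli of width $\geq N^{-3/4}$ fit inside the prescribed annulus of width $\rho/2+6$. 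The genuinely delicate part is to verify uniformly along the whole ladder that profiles $a_{k},b_{k}$ meeting all of these constraints (matching, non-cancellation of moduli, and the derivative bounds) can indeed be chosen; this is the core of Meshkov's construction and where the exponent $3/2$ (rather than the naive $2$) is ultimately extracted.
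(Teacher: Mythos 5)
Your approach is genuinely different from the paper's, and it contains a fatal gap. The paper (following Meshkov) uses only about $\sqrt{N}$ annuli, each of width $6\delta=6/\sqrt{N}$, on which the decay exponent jumps by $k_j\sim\sqrt{n_j}$; the total width is $O(1)$ and $\rho\geq 1$ stays fixed. You instead use $N$ sub-annuli on which the angular mode drops by $2$, and your own accounting then forces $\delta_k\gtrsim N^{-3/4}$ and $\rho\gtrsim N^{1/4}$.

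The gap is the non-vanishing of $u$. On a transition sub-annulus the loop $u(r,\cdot)$ has winding number $m=N-2k$ on the inner boundary (where $b_k=0$) and $m-2$ on the outer boundary (where $a_k=0$); a continuous nowhere-vanishing family of loops has constant winding number, so $u$ \emph{must} vanish somewhere in every transition sub-annulus, and $W=-\Delta u/u$ is then undefined or unbounded there unless $\Delta u$ vanishes as well. Your stated mechanism --- that $|a_k|^2+|b_k|^2$ bounded below forces $\bigl||a_k|-|b_k|\bigr|$ to be bounded below --- is false: at the crossover radius where $|a_k|=|b_k|$ the difference vanishes, and $u=e^{i(m-2)\varphi}\left(a_ke^{2i\varphi}+b_k\right)$ really does vanish at two angles. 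Handling these unavoidable zeros is precisely what the paper's Lemma \ref{M1} (Meshkov's phase function $h$) is for: the phase of $u_2$ is modulated so that on the angular sectors where cancellation with $u_1$ can occur, $u_2$ is locally harmonic (radial exponent and angular frequency both equal to $n-2k$), hence $\Delta u=0$ and one simply sets $W=0$ near the zeros, while on the complementary sectors the fast phase rotation gives $|e^{iS(\varphi)}-\lambda|\geq\sin(\pi/7)$ and hence $|u|\gtrsim|u_2|$. You explicitly defer exactly this point, so the core idea of the proof is missing. Secondarily, even granting the construction, taking $\rho\sim N^{1/4}$ makes the support annulus and the constants depend on $N$ --- weaker than the paper's statement, which holds for every fixed $\rho\geq1$ --- and it ruins the intended application to Theorem \ref{EX}: pulling back to $\Sb^2$ multiplies $W$ by $(1+|x|^2)^2/4\sim\rho^4\sim N$, so you would only obtain $N\gtrsim\|\bar W\|_\infty^{2/5}$ rather than the sharp exponent $2/3$.
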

\begin{proof}\phantom{e}\ \newline
Let $\rho\geq 1$, without loss of generality we suppose that $N$ is the square of an integer. Let  define $\delta=\frac{1}{\sqrt{N}}$.
  For $j$ an integer from  10 to $\sqrt{N}$, we note  $\ n_j=j^2$ and $k_j=2j+ 1$ and $\rho_j=\rho+6(j-1)\delta$. 
 On  each annulus $A_j=[\rho_{j},\rho_{j+1}]$  we construct a solution of  $\Delta u+W u=0$ such that  $|u|=a_jr^{-n_j}$
   on the sphere $\{\rho_j\}$ and
  $|u|=a_{j+1}r^{-n_j-k_j}=a_{j+1}r^{-n_{j+1}}$ on the sphere $\{\rho_{j+1}\}$.  
 We set  $a_{10}=1$, the numbers $a_j$, for $j>10$, still have to be defined. 
When working  in the annulus  $A_j$, we will write  $a,n,k,\rho$ instead of  $a_j,n_j,k_j,\rho_{j}$. 
Before proceeding to the proof it is useful to notice that 
\begin{equation}\label{prop}\begin{array}{c}
                 k\leq C_1\sqrt{n} \leq C_1 \sqrt{N} \\
		\frac{n}{k}\leq C_2\sqrt{N}\\
		\frac{1}{\delta}= \sqrt{N}\\
                 k\delta\leq 3
                \end{array}\end{equation}

To build $(u,W)$ in $A_j$, it is convenient  to divide the construction into four steps corresponding to the four annulus $[\rho,\rho+2\delta], [\rho+2\delta,\rho+3\delta], [\rho+3\delta,\rho+4\delta], [\rho+4\delta,\rho+6\delta].$ \ve\\
\emph{{\bf Step I. Construction on the annulus $[\rho,\rho+2\delta]$ }}\\

Define $$u_1=ar^{-n}e^{-in\varphi},$$ and $$u_2=-br^{-n+2k}e^{iF(\varphi)},$$ with 
 $b=a(\rho+\delta)^{-2k}$ such that $\left|\frac{u_2(\rho+\delta)}{u_1(\rho+\delta)}\right|=1$,
the function $F(\varphi)$ will be made explicit later. 
For $m$ from $0$ to $2n+2k-1$, we set $\varphi_m=\frac{2\pi m}{2n+2k}$ and $T=\frac{\pi}{n+k}$. 

 We will need the following 
\begin{lm}[\cite{M}, p. 350]\label{M1}
Their exists a real function $h$ of class $\Cc^2$, periodic of period $T$, with the following properties :\ve

\begin{itemize}
\item \begin{bulletequation}\label{h1}|h(\varphi)|\leq 5kT, \ \  \forall \varphi\in\R,\end{bulletequation}
\item \begin{bulletequation}\label{h2}|h^{'}(\varphi)|\leq 5k, \ \  \forall \varphi\in\R,\end{bulletequation}
\item \begin{bulletequation}\label{h3}|h^{''}(\varphi)|\leq Ckn, \ \  \forall \varphi\in\R,\end{bulletequation}
\item  \begin{bulletequation}\label{h4}h(\varphi)=-4k(\varphi-\varphi_m),\ \ \mbox{ for}\ \ \varphi_m-\frac{T}{5}\leq \varphi\leq \varphi_m+\frac{T}{5}.\end{bulletequation}
\end{itemize}
\end{lm}
\non Let now choose $F$ as follows  \begin{equation}\label{F}F(\varphi)=(n+2k)\varphi+h(\varphi).\end{equation} We also consider two  smooths radial functions $\psi_1$, $\psi_2$ with the following properties :\ve

\begin{itemize}
\item \begin{bulletequation}\label{psi1}0\leq\psi_i\leq1,\end{bulletequation}
\item  \begin{bulletequation}\label{psi2}  |\psi^{(p)}_i|\leq C\delta^{-p},\end{bulletequation}
\item   \begin{bulletequation}\label{psi3} \psi_1=1 \ \mbox{on} \ [\rho,\rho+\frac{5}{3}\delta]\  \mbox{ and} \  \psi_1=0\  \mbox{on}\ [\rho+(2-\frac{1}{10})\delta,\rho+2\delta],\end{bulletequation}
\item \begin{bulletequation}\label{psi4}\psi_2=0\ \mbox{on}\ [\rho,\rho+\frac{1}{10}\delta]\ \mbox{and}\  \psi_2=1\ \mbox{ on}\ [\rho+\frac{1}{3}\delta,\rho+2\delta].\end{bulletequation}
\end{itemize}

\non Finally  we set $u=\psi_1u_1+\psi_2u_2$.   
The estimate on $[\rho,\rho+2\delta]$ is divided among four cases.\\

\emph{Step I.a. The set $[\rho,\rho+\frac{\delta}{3}]$}\\

\non We have  $u=u_1+\psi_2u_2$, and :  $|\frac{u_2}{u_1}|=\left(\frac{r}{\rho+\delta}\right)^{2k}.$\ve 
\newline Then we introduce  $\alpha$ as an  upper bound of $|\frac{u_2}{u_1}|$ : 
 $$\left|\frac{u_2}{u_1}\right|\leq \left(\frac{\rho+\frac{\delta}{3}}{\rho+\delta}\right)^{2k}:=\alpha<1.$$
We notice that
  $$|u|\geq |u_1|-|\psi_2u_2|\geq|u_1|-|u_2|\geq \frac{1-\alpha}{\alpha}|u_2|,$$
Since $\alpha=e^{2k\ln(1-\frac{2\delta}{3(\rho+\delta)})}$, a computation  of $\frac{\alpha}{1-\alpha}$ leads to the following inequality:
 \begin{equation}\label{u_2u}|u_2|\leq\frac{3(\rho+\delta)}{4k\delta}|u|.\end{equation}

\noindent Now we compute $\Delta u$ :  
\begin{equation}\label{Deltau}\Delta u=\psi_2\Delta u_2+2\frac{d\psi_2}{dr}\frac{\partial u_2}{\partial r}+\left(\frac{1}{r}\frac{d\psi_2}{dr}+\frac{d^2\psi_2}{dr^2}\right)u_2.\end{equation}
First we estimate $\Delta u_2$. One has 
\begin{eqnarray*}
  \Delta u_2 &=&\frac{1}{r^2}\left((-n+2k)^2+iF^{\prime\prime}-(F^{\prime})^2 \right)u_2 .\end{eqnarray*}
From the defintion of $F$ \eqref{F} this gives : 
\begin{equation}\Delta u_2=\frac{1}{r^2}\left[-8kn+ih^{\prime\prime}-2(n+2k)h^\prime-(h^\prime)^2\right]u_2.\end{equation}

\non Using properties of $h$ , we obtain  
\begin{eqnarray}\label{Deltau2u2}
|\Delta u_2|&\leq& \frac{1}{r^2}\left( 8kn+Ckn+C_2k^2\right)|u_2|
\end{eqnarray}
Then from inequality \eqref{u_2u} we have ,
$$|\Delta u_2|\leq \frac{1}{r^2}Ckn\frac{3(\rho+\delta)}{k\delta }|u| \nonumber\\,$$
 Now \eqref{prop} and $\frac{\rho+\delta}{r^2}\leq1$ gives:
$$|\Delta u_2|\leq C\frac{n}{\delta}|u|\leq N^{3/2}|u|$$


\non Now we estimate the other terms in the right and side of \eqref{Deltau}. The conditions on $\psi$ \eqref{psi2} and estimate of $u_2$ \eqref{u_2u} lead to  
 \begin{equation}\left|\frac{\partial \psi_2}{\partial r}\frac{\partial u_2}{\partial r}\right|\leq \frac{Cn}{k\delta^2}|u|.\end{equation}
Similarly, we have 
\begin{eqnarray}
 \left|\left(\frac{d\psi_2}{dr}+\frac{d^2\psi_2}{dr^2}\right)\frac{1}{r}u_2\right| 
&\leq&C\frac{1}{\delta^3}|u|
\end{eqnarray}
Finally,
 $$|\Delta u|\leq C(\frac{n}{\delta}+\frac{n}{k\delta^2}+\frac{1}{\delta^3})|u|.$$  Then we have shown, using relations \eqref{prop} beetween
 $k$, $n$, $\delta$ and $N$ that :
\begin{equation}
 |\Delta u|\leq C N^{\frac{3}{2}}|u|.
\end{equation}\\

\emph{Step I.b.  $[\rho+\frac{5}{3}\delta,\rho+2\delta] $ }\\

We want to estimate  $\frac{|\Delta u|}{|u|}$, with  $u=u_2+\psi_1u_1$.
First since, $$\frac{|u_2|}{|u_1|}=\frac{b}{a}r^{2k}\geq\left(\frac{\rho+\frac{5}{3}\delta}{\rho+\delta}\right)^{2k}=\beta>1,$$ 
\non we have 
\begin{equation}\label{uu_2}|u|\geq |u_2|-|\psi_1u_1|\geq |u_2|-|u_1|\geq\left(\frac{\beta-1}{\beta}\right)|u_2|>0.\end{equation}
\non One also has
\begin{equation}\label{DeltauIb}\Delta u=\Delta u_2+2\frac{d\psi_1}{dr}\frac{\partial u_1}{\partial r}+\left(\frac{1}{r}\frac{d\psi_1}{dr}+\frac{d^2\psi_1}{dr^2}\right)u_1.\end{equation}
\non First since $r\geq\rho\geq1$ one has from \eqref{uu_2} 
\begin{equation}\label{Deltau_2uIb}|\Delta u_2|\leq \frac{Ckn}{r^2}|u_2|\leq Ckn\frac{\beta}{\beta-1}|u|.\end{equation}
Then we estimate $\frac{\beta}{\beta-1}$. On the real set  $[0,\frac{4}{3}]$, we have  : 
  $$\begin{array}{rcccl}
(3/4)\ln(4x/3)&\leq& \ln(1+x)&\leq& x\\
1+x&\leq& e^x&\leq& 1+3/4(e^{4/3}-1)x
  \end{array} $$

\noindent Now from the definition of $\beta$ and
since  $k\delta\leq 3$ this estimates gives,  
$$\frac{\beta}{\beta-1}\leq 
\frac{C}{k\delta},$$
then from \eqref{Deltau_2uIb}  we have :   \begin{equation}\label{Ib1}|\Delta u_2|\leq C\frac{n}{\delta}|u|.\end{equation}
Moreover we also have $$\left|\frac{\partial u_1}{\partial r}\frac{d\psi_1}{dr}\right|\leq\frac{C}{\delta}\left|\frac{\partial u_1}{\partial r}\right|,$$
with
  $$\left|\frac{\partial u_1}{\partial r}\right|\leq Cn|u_1|\leq C\frac{n}{\beta -1}|u|\leq\frac{Cn}{k\delta}|u|,$$ 
  thus
\begin{eqnarray*}
 \left|\frac{\partial u_1}{\partial r}\frac{d\psi_1}{dr}\right|&\leq& C \frac{n}{k\delta^2}|u|. \nonumber 
\end{eqnarray*}
This also gives
\begin{eqnarray} 
\left|\frac{\partial u_1}{\partial r}\frac{d\psi_1}{dr}\right|&\leq&CN^{\frac{
 3}{2}}|u|.
\end{eqnarray}
Similarly we have,
\begin{eqnarray*}
 \left|\frac{d^2\psi_1}{dr^2}+\frac{1}{r}\frac{d\psi_1}{dr}\right||u_1| &\leq &C\left(\frac{1}{\delta^2}+\frac{1}{\delta}\right)|u_1|\\
&\leq&C\frac{1}{\delta^2}|u_1|\\
&\leq&CN^{\frac{3}{2}}|u|.
\end{eqnarray*}

\non Here again we have shown $$ |\Delta u|\leq CN^{\frac{3}{2}}|u|.$$

\emph{Step I.c  $\{(r,\varphi);r\in[\rho+\frac{1}{3}\delta,\rho+\frac{5}{3}\delta], \ \varphi_m+\frac{T}{5}\leq\varphi\leq\varphi_m+\frac{4T}{5}\}$}\\

\non On this set we have $u=u_1+u_2$ and 
\begin{equation}|u|=|u_2|\left|e^{i(F(\varphi)+n\varphi)}-\frac{a}{br^{2k}}\right|.\end{equation}
\newline Let $S(\varphi)=F(\varphi)+n\varphi$,
we have $S(\varphi_m)=2\pi m$ and  $$S^\prime(\varphi)=2n+2k+h^\prime(\varphi)$$
Since $|h'(\varphi)| \leq5k$ we have 
$$S^\prime(\varphi)\geq2n-3k$$
Now since $n=j^2$ and $k=2j+1$, the condition $j\geq 10$ force $S^\prime(\varphi)>n$ for $\varphi\in [\varphi_m,\varphi_{m+1}]$.
Then for $$\varphi_m+\frac{T}{5}\leq\varphi\leq\varphi_{m+1}-\frac{T}{5}$$
 we have  $$S(\varphi)\geq S(\varphi_m+\frac{T}{5})\geq S(\varphi_m)+\frac{nT}{5}$$
Using the same argument  to get an upper bound on $S(\varphi)$ we can state that 
$$2\pi m+\frac{nT}{5}\leq S(\varphi) \leq 2\pi(m+1)-\frac{nT}{5}$$
This can be written 
$$2\pi m+\frac{n\pi}{5(n+k)}\leq S(\varphi) \leq 2\pi(m+1)-\frac{n\pi}{5(n+k)}$$
Now since $j\geq10$, we clearly have $\frac{n\pi}{5(n+k)}\geq\frac{\pi}{7}$. Then we can state that
$$\left|e^{iS(\varphi)}-\lambda\right|\geq \sin(\frac{\pi}{7}),$$ for any real number $\lambda$. This leads to \begin{equation}\label{u2u}|u|\geq \sin(\frac{\pi}{7})|u_2|.\end{equation}
Finally using that $\Delta u=\Delta u_2$, \eqref{Deltau2u2} and \eqref{u2u} we have 
\begin{eqnarray}
 |\Delta u|=|\Delta u_2|&\leq& Ckn|u_2|\leq ckn|u|\nonumber\\
|\Delta u|&\leq &CN^{\frac{3}{2}}|u|			 
\end{eqnarray}

\emph{Step I.d)  $\{r\in[\rho+\frac{1}{3}\delta,\rho+\frac{5}{3}\delta],|\varphi-\varphi_m|<\frac{T}{5}\}$}\\

Here we just need to notice that $u_2=-br^{-n+2k}e^{i(n-2k)\varphi}e^{4k\varphi_m}$ is  harmonic  and that $\psi_1=\psi_2=1$.
Then $u$ is harmonic and we simply set  $W=0$.\vspace{0,2mm}\\

\non\emph{{\bf Step II. Construction on the annulus $[\rho+2\delta,\rho+3\delta]$.}}\\

\non Recall that $u_2=-br^{-n+2k}e^{iF(\varphi)}$.
We now define  $$u_3=-br^{-n+2k}e^{i(n+2k)\varphi}.$$ To pass from $u_2$ to $u_3$,
we consider a smooth, radial function $\psi$ on $[\rho+2\delta,\rho+3\delta]$ with the following properties :
\begin{itemize}
\item \begin{bulletequation}\label{psiII1} 0\leq\psi(r)\leq 1 \end{bulletequation}
\item  \begin{bulletequation}\label{psiII2}\psi(r)=1$ if $r\leq \rho+{\frac{7}{3}\delta}\end{bulletequation}
\item \begin{bulletequation}\label{psiII3}\psi(r)=0$ if $r\geq \rho+\frac{8}{3}\delta \end{bulletequation}
\item\begin{bulletequation}\label{psiII4}|\psi^{(p)}(r)| \leq\frac{C}{\delta^p}\end{bulletequation}
\end{itemize}

\non Then we set on $[\rho+2\delta,\rho+3\delta]$, $$u=-br^{-n+2k}e^{i[\psi(r)h(\varphi)+(n+2k)\varphi]} $$
with  $h$, the fonction of lemma \ref{M1}.
\newline Now we prepare the computaiton of $\Delta u$, one has: \begin{eqnarray*} 
      \frac{\partial u}{\partial r}&=&\left(\frac{-n+2k}{r}+i\psi^{'}(r)h(\varphi)\right)u\\
\frac{\partial^2u}{\partial r^2}&=&\frac{(-n+2k)(-n+2k-1)}{r^2}u\\
&+&2i\frac{-n+2k}{r}\psi^\prime(r)h(\varphi)u+i\psi^{''}(r)h(\varphi)u-\psi^{'}(r)^2h(\varphi)^2u \\
\frac{\partial^2u}{\partial \varphi^2} &=&i\psi(r)h^{''}(\varphi)u+\left(i\psi(r)h^{'}(\varphi)+(n+2k)\right)^2u
 \end{eqnarray*}
       
\begin{eqnarray*}
 \Delta u&=&\left(\frac{(-n+2k)(-n+2k-1)}{r^2}+2i\frac{-n+2k}{r}\psi^\prime(r)h(\varphi)\right)u\\
 &+&\left( \frac{-n+2k}{r^2}+\frac{i\psi^{'}(r)h(\varphi)}{r}+i\psi^{''}(r)h(\varphi)-\psi^{'}(r)^2h(\varphi)^2\right)u\\
&+&\left(\frac{i\psi(r)h^{''}(\varphi)}{r^2}+ \frac{\left(i\psi(r)h^{'}(\varphi)+(n+2k)\right)^2}{r^2}\right)u
\end{eqnarray*}
We get 
\begin{eqnarray*}
 \Delta u &= &\left[\left(\frac{-n+2k}{r}+i\psi^{'}(r)h(\varphi)\right)^2+ih(r)\left(\frac{\psi^{'}(r)}{r}+\psi^{''}(r)\right)\right]u\\
&+&\left[\frac{i\psi(r)h^{''}(\varphi)}{r^2}-\frac{\left(n+2k+\psi(r)h^{'}(\phi)\right)^2}{r^2}\right]u
\end{eqnarray*}
After simplification, the main point is that there is no term of order $n^2$ left, we obtain : 
\begin{eqnarray*}
 |\Delta u|\leq C (kn+k^2+|\psi^{'}h|^2+n|\psi^{'}h|++|h\psi^{''}|+|\psi h^{''}|+n|\psi||h^{'}|+|\psi^2||h{'}|^2)u
\end{eqnarray*}
Now using \eqref{prop}, the  properties of $\psi$ (\ref{psiII1}-\ref{psiII4}) and of $h$ (\ref{h1}-\ref{h4}), we have
$$|\Delta u| \leq  C N^{\frac{3}{2}}|u|.$$

\non \emph{{\bf Step III. Construction on the annulus $[\rho+3\delta, \rho+4\delta]$.}}\phantom{e}\vspace{3mm}\\
\noindent Recall that  $$u_3=-br^{-n+2k}e^{i(n+2k)\varphi}.$$
\noindent In this step, we want to pass from $u_3$ to  the harmonic function:  $$u_4=-b_1r^{-n-2k}e^{i(n+2k)\varphi}.$$
\newline
Let $d=(\rho+3\delta)^{4k}$,  $b_1=bd$ and  $$g(r)=dr^{-4k}=\left(\frac{\rho+3\delta}{r}\right)^{4k}.$$ Then $|g^{p}(r)|\leq C^pk^p$, futhermore  $$g(r)\geq\left(\frac{\rho+3\delta}{\rho+4\delta}\right)^{4k}\geq e^{4k\ln(1-\frac{\delta}{\rho+4\delta})}$$
We have $0\leq \frac{\delta}{\rho+4\delta} \leq \frac{1}{4}$ and  $\ln(1-x)\geq 4\ln(\frac{3}{4})x$ for $x$ in the real set $[0,\frac{1}{4}]$,
then $$g(r)\geq e^{-16k\ln(\frac{4}{3})\frac{k\delta}{\rho+4\delta}}\geq e^{-16\ln\frac{4}{3}}\geq e^{-5}$$
We know consider a smooth radial function $\psi$, with the following properties :\ve
\begin{itemize}
\item[$\bullet$] $0\leq \psi \leq 1$
\item[$\bullet$] $|\psi^{(p)}|\leq \frac{C}{\delta^p}$
\item[$\bullet$] $\psi(r)=1$ if $r\in [\rho+3\delta,\rho+(3+\frac{1}{3})\delta]$
\item[$\bullet$] $\psi(r)=0$ if $r\in [\rho+(3+\frac{2}{3})\delta]$\ve
\end{itemize}
We let  $f(r)=\psi(r)+(1-\psi(r))g(r)$ so one has  $$e^{-5}\leq f(r)\leq 1.$$
A computation of $f^{\prime}(r)$ and $f^{(2)}(r)$ leads to the following estimates:
\begin{eqnarray*}
 |f^{'}(r)|&\leq &C\left(|\psi^{'}(r)|+|\psi^{'}(r)g(r)|+|\psi g^{'}|\right) \\
|f^{'}(r)|&\leq & \frac{C}{\delta}+Ck\leq \frac{C}{\delta},
\end{eqnarray*}
and 
$$|f^{''}(r)|\leq \frac{C}{\delta^2}.$$

We finally set 
$u=u_3f$, wich implies that  $|u|>0$ and $|u_3|\leq C |u|$. To compute $\Delta u_{3}$ we observe that 
\begin{eqnarray*}
 \frac{\partial u_3}{\partial r} &=&\frac{-n+2k}{r}u_3,\\
\frac{\partial^2u_3}{\partial r^2}&=&\frac{(-n+2k)(-n+2k-1)}{r^2}u_3,\\
&\mbox{and}&\\
\frac{\partial^2 u_3}{\partial \varphi^2} &=& -(n+2k)^2u_3.
\end{eqnarray*}
 Then we have :
$$\Delta u_3 =\frac{1}{r^2}\left((-n+2k)(-n+2k-1)+(-n+2k)-(n+2k)^2\right)u_3 $$
Thus we have the following estimates
\begin{equation}\label{Deltau_3u_3}|\Delta u_3|\leq Ckn|u_3|\leq Ckn|u|\end{equation}

\non From $\displaystyle{\left|\frac{\partial u_3}{\partial r}\right|\leq Cnu_3}$,
  and
 $\displaystyle{\left|\frac{d^{p}f}{dr^{p}}\right|\leq \frac{C}{\delta^p}}$ one can deduce that:
 $$2\left|\frac{df}{dr}\frac{\partial u_3}{\partial r}\right|\leq C\frac{n}{\delta}u_3\leq C\frac{n}{\delta}u,$$
 and $$\left|\frac{d^2f}{dr^2}+\frac{1}{r}\frac{df}{dr}\right||u_3|\leq \frac{C}{\delta^2}|u_3|\leq \frac{C}{\delta^2}|u|.$$
We then have the following estimate on $\Delta u$ : 
\begin{eqnarray*}|\Delta u| &\leq& C \left( \frac{n}{\delta}+kn+\frac{1}{\delta^2}\right)|u| \\
                  |\Delta u| &\leq & CN^{\frac{3}{2}}|u|
 \end{eqnarray*}
\emph{{\bf Step IV. Construction on the annulus  $[\rho+4\delta,\rho+6\delta]$}}\\
\non Recall that  $$u_4=-b_1r^{-(n+2k)}e^{i(n+2k)\varphi}$$ and let \begin{equation}\label{u_5}u_5=cr^{-n-k}e^{-i(n+k)\varphi}.\end{equation}
 We choose $c$ such that \begin{equation}\label{c}\left|\frac{u_5(\rho+5\delta)}{u_4(\rho+5\delta)}\right|=1\end{equation}
then $$\left|\frac{u_5}{u_4}\right|=\frac{r^k}{(\rho+5\delta)^k}.$$
Like step I we consider two smooth radial functions $\psi_4$ and $\psi_5$ with the following properties \ve
\begin{itemize}
\item[$\bullet$] $\psi_4=1$ on $[\rho+4\delta,\rho+(4+\frac{5}{3})\delta]$ and $\psi_4=0$ on $[\rho+(5,9)\delta,\rho+6\delta]$
\item[$\bullet$] $\psi_5=0$ on $[\rho+4\delta,\rho+(4,1)\delta]$ and $\psi_5=1$ on $[\rho+(4+\frac{1}{3})\delta,\rho+6\delta]\ve$
\end{itemize}
We let $$u=\psi_4u_4+\psi_5u_5.$$ Now we estimate $\Delta u$. Here again it is convenient to divide this into three steps:\vspace{2mm} \\
\non\emph{Step IV.a  $ [\rho+(4+\frac{1}{3})\delta, \rho+(4+\frac{5}{3})\delta]$}\phantom{e}\vspace{2mm}\\
We just have $\psi_4=\psi_5=1$ then $\Delta u =0$ and  we set $W=0$.\vspace{2mm}\\
\non \emph{IV.b  $ [\rho+(4+\frac{5}{3})\delta, \rho+6\delta]$}\vspace{2mm}\\
We have $u=\psi_4u_4+u_5$, then $|u|\geq |u_5|-|u_4|$. 
$$\left|\frac{u_5}{u_4}\right|=\frac{r^k}{(\rho+5\delta)^k}\geq \left( \frac{\rho+(4+\frac{5}{3})\delta}{\rho+5\delta}\right)^k\geq e^{k\ln(1+\frac{2\delta}{3(\rho+5\delta)})}$$
Let $C_1$ such that $\ln(1+x)\geq C_1x$ on  $[0,\frac{2}{15}]$.\newline
Then we get 
$$\left|\frac{u_5}{u_4}\right|\geq e^{\ds{C_1\frac{2k\delta}{3(\rho+5\delta)}}}\geq 1+C_1\frac{2k\delta}{3(\rho+5\delta)},$$  
since $|u|\geq |u_5|-|u_4|$ this gives $$|u|\geq C_1\frac{2k\delta}{3(\rho+5\delta)}|u_4|$$
and then  \begin{equation}\label{u_4u}|u_4|\leq \frac{C}{k\delta}|u|.\end{equation}
\newline We also have 
\begin{eqnarray*}
\Delta u&=
        &2\frac{d\psi_1}{dr}\frac{\partial u_4 }{\partial r}+\left(\frac{\partial^2\psi_4}{\partial r^2}+\frac{1}{r}\frac{\psi_4}{\partial r}\right)u_4.
\end{eqnarray*}
Since $|\frac{\partial u_4}{\partial r}|\leq Cn |u_4|$, we get 
\begin{eqnarray*}
 |\Delta u| &\leq & C\frac{n}{\delta}|u_4|+\frac{C}{\delta^2}|u_4|+\frac{C}{\delta}|u_4|.
%
\end{eqnarray*}
Here again we have obtain from \eqref{u_4u} 
$$|\Delta u| \leq  C N ^{\frac{3}{2}}|u|.$$

\emph{Step IV.c $[\rho+4\delta, \rho+(4+\frac{1}{3})\delta ] $ }\phantom{e}\vspace{2mm}\\
\non This step is similar to step IV.b and therefore the estimation is omitted.\newline

 Then, defining $a_{j+1}=c$ with $c$ from \eqref{u_5} and\eqref{c}, recall that $n_{j+1}=n_j+k_j=j^2+2j+1$, and define $u(r,\varphi)=u_j(r,\varphi)$ for $\rho_j\leq r\leq\rho_{j+1}$  we have construct on the set  
$$I_N=\bigcup_{10\leq j\leq \sqrt{N}} A_j=\bigcup_{10\leq j\leq \sqrt{N}}[\rho+6(j-1)\delta, \rho+6j\delta]=[\rho+54\delta, \rho+6[,$$
a solution of  $\Delta u+Wu=0$ with $|W|\leq C N^{\frac{3}{2}}$. 
Now we extend our construction to the whole $\R^2$ in the following way: 
On  $[\rho+6\delta,+\infty[$ we just keep the harmonic function $u$ obtained on the last annulus $A_{\sqrt{N}}$ : $u=a_Nr^{-N}e^{-iN\varphi}$.
On $[0,\rho+54\delta]$ we define $g(r)$ a smooth radial function such that \begin{itemize}
                                                                     \item $g(r)=r^{n_1}$ in $[0,\frac{\rho}{4}]$
								    \item $g(r)=r^{-n_1}$ in  $[\frac{3}{4}\rho,\rho+54\delta]$
								  \item $|g^{(p)}(r)|\leq C{\rho}^{-1}$, for all $r$ in $[0,\rho+54\delta]$
                                                                    \end{itemize}
 The constant $C$ is the last point doesn't depend on $N$ ( the gap from $n_1$ to $-n_1$ is fixed since $n_1=10$). Now define  $u=g(r)e^{-in_1\varphi}$. On the compact set 
$[0,\rho+10\delta]$ on easily get $\left|\frac{\Delta u}{u}\right|\leq C$ with $C$ independant on $N$. 
Finally we have constructed on $\R^2$  a solution of $\Delta u+Wu=0$ with  $\|W\|_{L^\infty}\leq CN^{\frac{3}{2}}$ and  $|u(x)|=|x|^{-N}$ for $|x| \geq \rho+6$. 

\non Then we consider the inverse of the stereographic projection : 
$$\begin{array}{rcl} \pi : \R^2&\rightarrow&\mathbb{S}^2\setminus \mathrm{N}\\
								(x,y) &\mapsto& \frac{1}{x^2+y^2+1}(2x,2y,x^2+y^2-1)
\end{array}$$
In the Chart  $(\mathbb{S}^2\setminus\mathrm{(0,0,1)},\pi^{-1})$, the canonical metric is written $$g_{_{\mathbb{S}^2}}=\left(\begin{array}{cc}
					\frac{4}{(x^2+y^2+1)^2}&0\\
					0&\frac{4}{(x^2+y^2+1)^2}
                                        \end{array}
                                        \right)$$
                                        
 On $\mathbb{S}^2\setminus\mathrm{N}$, we have   $$\Delta_{\mathbb{S}^2}=\frac{1}{4}(x^2+y^2+1)^2\Delta_{\mathbb{R}^2}$$ 
 Let $\bar{W}$ and $\bar{u}$  two  real valued functions defined on  $\mathbb{S}^2\setminus\mathrm{(0,0,1)}$ and  $C$ a positive constant. 
We consider $u=\bar{u}\circ\pi^{-1}$ and $W=\bar{W}\circ\pi^{-1}$,
 So we have \newline 
  $$\left\{\begin{array}{rcl}
 \Delta_{\mathbb{S}^2}\bar{u}&=&\bar{W}\bar{u}\\
 |\bar{W}(x)|&\leq &C
 \end{array}
 \right.                                 
 \Longleftrightarrow \left\{\begin{array}{rcl} \Delta_{\R^2}u&=&Wu\\
 			|W(x,y)|&\leq &\frac{C}{(1+x^2+y^2)^2}
 \end{array}   \right. $$          
 Since the function $W$ we have constructed is compactly supported with \newline 
 $|W(x)|\leq CN^{\frac{3}{2}}$, their exists  $C^{'}$ such that $$\frac{4}{(1+x^2+y^2)}|W(x,y)|\leq C^{'}N^{\frac{3}{2}},\:\:\forall (x,y)\in\R^2. $$
It follows that the function $\bar{u}$ is a solution of $\Delta_{\Sb^2}\bar{u}=\bar{W}\bar{u}$ on $\Sb^2$, with $\bar{u}$ vanishing
 at order $N$ at the north pole and with $N\geq\|\bar{W}\|_{\infty}^{2/3}$.
 \end{proof}

\bibliographystyle{siam}
\bibliography{quantitativeuniqueness}

\end{document}